\providecommand{\U}[1]{\protect\rule{.1in}{.1in}}
\numberwithin{equation}{section}
\newtheorem{theorem}{Theorem}[section]
\newtheorem{lemma}[theorem]{Lemma}
\newtheorem{corollary}[theorem]{Corollary}
\newtheorem{proposition}[theorem]{Proposition}
\newtheorem{remark}[theorem]{Remark}
\newtheorem{definition}[theorem]{Definition}
\def\<{\langle}
\def\>{\rangle}
\def\d{{\rm d}}
\def\E{\mathbb{E}}
\def\N{\mathbb{N}}
\def\P{\mathbb{P}}
\def\R{\mathbb{R}}
\def\T{\mathbb{T}}
\def\Z{\mathbb{Z}}
\def\F{\mathcal{F}}
\def\eps{\varepsilon}
\begin{document}

\title{Convergence of stochastic 2D inviscid Boussinesq equations with transport noise to a deterministic viscous system
}
\author{Dejun Luo\footnote{Email: luodj@amss.ac.cn. Key Laboratory of RCSDS, Academy of Mathematics and Systems Science, Chinese Academy of Sciences, Beijing, China and School of Mathematical Sciences, University of the Chinese Academy of Sciences, Beijing, China.} }

\maketitle

\vspace{-20pt}

\begin{abstract}
The inviscid 2D Boussinesq system with thermal diffusivity and multiplicative noise of transport type is studied in the $L^2$-setting. It is shown that, under a suitable scaling of the noise, weak solutions to the stochastic 2D Boussinesq equations converge weakly to the unique solution of the deterministic viscous Boussinesq system. Consequently, the transport noise asymptotically regularizes the inviscid 2D Boussinesq system and enhances dissipation in the limit.
\end{abstract}

\textbf{Keywords:} 2D Boussinesq system, vorticity formulation, transport noise, weak convergence

\textbf{MSC (2010):} primary 60H15; secondary 35Q35

\section{Introduction}

Recently, some scaling limits have been proved for the vorticity form of stochastic two dimensional (2D) Euler equations on the torus perturbed by multiplicative noise of transport type, both in the white noise regime and in the more regular $L^2$-regime. On the one hand, it was shown in \cite{FlaLuoAoP} that, under a suitable scaling of the noise, the white noise solutions of stochastic 2D Euler equations converge weakly to the unique stationary solution of the 2D Navier-Stokes equation driven by space-time white noise; the latter has been studied intensively in the last three decades, cf. \cite{AC90, AF, DaPD}. On the other hand, in the more regular $L^2$-regime, we proved in \cite{FGL} that the limit equation is the vorticity form of the deterministic 2D Navier-Stokes equations; see \cite{Galeati} for an earlier work on linear transport equations and \cite{LuoSaal, LuoZhu} for similar scaling limits in both regimes on the stochastic modified Surface Quasi-Geostrophic equations. A common feature in the above limit results is that the uniqueness of solutions to approximating nonlinear equations is unknown, while the limit equations are uniquely solvable in suitable sense. This shows the approximative regularizing effect of transport noises. Moreover, transport noise, which is formally energy-preserving, tends to dissipate energy in the limit process and thus it exhibits mixing property of the fluid. Another point, particularly relevant in the $L^2$-regime, is that larger noise intensity leads to greater viscosity coefficient in the deterministic limit equations. This is closely related to the phenomenon of dissipation enhancement, which has been studied extensively in the deterministic setting, see e.g. \cite{Constantin, Iyer} and the references therein. In the recent paper \cite{FlaLuo19}, we have explored this idea to show that transport noise provides a bound on the vorticity for stochastic 3D Navier-Stokes equations with unitary viscosity, yielding long-term existence of solutions for large initial data, with large probability. The above limit results remind us of the analog with the theory of stabilization by noise \cite{Arnold, ArnoldCW}, which shows that suitable noises stabilize some finite dimensional linear ODE with a coefficient matrix of negative trace. The purpose of the present work is to establish a similar scaling limit for the 2D Boussinesq system.

\subsection{Deterministic 2D Boussinesq system}

The 2D Boussinesq system describes the evolution of the velocity field $u$ of an incompressible fluid under a vertical force which is proportional to some scalar field $\theta$ (e.g., the temperature), the latter being transported by $u$. The viscous Boussinesq system with thermal diffusivity reads as
  \begin{equation}\label{2D-Boussinesq}
  \left\{\aligned
  & \partial_t \theta + u\cdot\nabla \theta = \kappa\Delta \theta,\\
  & \partial_t u + u\cdot\nabla u + \nabla p= \delta \Delta u + \theta e_2, \\
  & \nabla \cdot u=0,
  \endaligned \right.
  \end{equation}
where $\kappa, \delta\geq 0$ are the diffusion coefficient and the kinematic viscosity, respectively; $p$ is the pressure field, $e_2=(0,1)$ and thus $\theta e_2$ represents the buoyancy force. The interested readers can refer to \cite{Majda} for geophysical background of the Boussinesq system.

When both $\kappa$ and $\delta$ in \eqref{2D-Boussinesq} are positive constants, the existence and uniqueness of finite energy solutions are well known, see e.g. \cite{Cannon-DiBene, Guo}. According to Moffatt's paper \cite{Moffatt}, it is important to study the regularity problem of Boussinesq system in the cases of zero diffusivity ($\kappa=0$) and zero viscosity ($\delta=0$). On one hand, if $\kappa =0$ and $\delta >0$, it was proved in \cite{Danchin-Paicu08} that the viscous system also admits a unique global solution for any initial data with finite energy; therefore, in this case, the system \eqref{2D-Boussinesq} enjoys similar well posedness results as the 2D Navier-Stokes equations, though the proofs in \cite{Danchin-Paicu08} make use of some new and more technical tools. On the other hand, the theory on the zero viscosity case is less complete. If $\kappa>0$ and $\delta=0$, and the initial data $(\theta_0, u_0)$ have $H^m$-regularity for some integer $m>2$, the global well posedness of \eqref{2D-Boussinesq} was proved in \cite[Theorem 1.2]{Chae}. This result was extended in \cite{Hmi-Ker}  to the case where the initial velocity $u_0\in B^{1+2/p}_{p,1}$ and the initial temperature $\theta_0\in L^r$ for some $2<r\leq p<\infty$.

In the current work we are concerned with the zero viscosity case, namely, the constants $\kappa>0$ and $\delta=0$ in \eqref{2D-Boussinesq}. We restrict ourselves to the torus $\T^2= \R^2/\Z^2$ with periodic boundary condition, and reformulate the velocity equation in vorticity form:
  \begin{equation}\label{2D-Boussinesq-1}
  \left\{\aligned
  & \partial_t \theta + u\cdot\nabla \theta = \kappa\Delta \theta,\\
  & \partial_t\omega + u\cdot\nabla \omega = \partial_1\theta, \\
  & u= K\ast \omega,
  \endaligned \right.
  \end{equation}
where $\partial_i= \partial_{x_i}$, $\omega= \nabla^\perp \cdot u= \partial_1 u_2 - \partial_2 u_1$ is the vorticity, $K$ is the Biot-Savart kernel on $\T^2$ and $\ast$ the convolution operation. Note that the system preserves the average on $\T^2$ of solutions $\theta$ and $\omega$, thus we shall always assume that $\theta_0$ and $\omega_0$ have zero average on $\T^2$. Moreover, if $\theta \equiv 0$ then the system \eqref{2D-Boussinesq-1} reduces to the vorticity form of 2D Euler equations. We will work in the $L^2$-regime, namely, the initial data $(\theta_0,\omega_0)$ are square integrable functions on $\T^2$. Similarly to the theory of 2D Euler equations, it is easy to show that \eqref{2D-Boussinesq-1} admits a weak solution for any $L^2$-initial data $(\theta_0,\omega_0)$, but the uniqueness is open. In \cite{Danchin-Paicu}, Danchin and Paicu proved global well posedness of \eqref{2D-Boussinesq-1} for Yudovich-type initial data, i.e., $\omega_0 \in L^\infty(\T^2)$ and the initial temperature $\theta_0$ fulfils a natural additional condition; see \cite{Paicu-Zhu} for related recent results.

\subsection{Our model and main result}

Let us first give a very rough derivation of the stochastic model studied in the paper, which comes from the separation of scales and approximating small scales by noise, cf. \cite[Section 1.2]{FlaLuo19} or \cite{MajdaTV} for similar discussions. We decompose the initial condition as follows:
  $$\theta_0 = \theta_{R,0} + \theta_{L,0}, \quad \omega_0 = \omega_{R,0} + \omega_{L,0}, $$
where the subscript $R$ stands for the ``Resolved'' scale part and $S$ the ``Small'' scale part. Assume that one can solve the system
  $$  \left\{\aligned
  & \partial_t \theta_R + u\cdot\nabla \theta_R = \kappa\Delta \theta_R, \quad \theta_R|_{t=0} = \theta_{R,0},\\
  & \partial_t\omega_R + u\cdot\nabla \omega_R = \partial_1\theta_R, \quad \omega_R|_{t=0} = \omega_{R,0},\\
  & \partial_t \theta_S + u\cdot\nabla \theta_S = \kappa\Delta \theta_S, \quad \theta_S|_{t=0} = \theta_{S,0} ,\\
  & \partial_t\omega_S + u\cdot\nabla \omega_S = \partial_1\theta_S, \quad \omega_S|_{t=0} = \omega_{S,0}
  \endaligned \right. $$
with $u= K\ast (\omega_R + \omega_S)$; then the sums $\theta = \theta_R + \theta_S$ and $\omega= \omega_R + \omega_S$ solve \eqref{2D-Boussinesq-1}. Compared to the large scale component $(\theta_R, \omega_R)$, in a certain limit, the small scale part $(\theta_S, \omega_S)$ varies very fast in time and it is reasonable to approximate $u_S= K\ast \omega_S$ by some noise $\eta$ which is white in time. Now, the equations for the large scale variables $(\theta_R, \omega_R)$ have the form of 2D Boussinesq system \eqref{2D-Boussinesq-1} perturbed a multiplicative noise of transport type:
  \begin{equation}\label{stoch-2D-Boussinesq}
  \left\{\aligned
  & \partial_t \theta + u\cdot\nabla \theta = \kappa\Delta \theta + \eta \cdot\nabla \theta,\\
  & \partial_t\omega + u\cdot\nabla \omega = \partial_1\theta + \eta \cdot\nabla \omega, \\
  & u= K\ast \omega,
  \endaligned \right.
  \end{equation}
where we have changed the variables $(\theta_R, \omega_R, u_R)$ back to $(\theta, \omega, u)$. The transport noise can also be motivated by some arguments from variational principles, see \cite{Holm}. We mention that stochastic 2D Navier-Stokes equations perturbed by multiplicative transport noise have already been considered in \cite{BCF, MikRoz}.

The noise $\eta$ considered in this paper has the following form:
  $$\eta(t,x)= \frac{\sqrt{2\nu}}{\|\sigma \|_{\ell^2}} \sum_k \sigma_k e_k(x) \dot W^k_t, $$
where $\nu>0$ represents the intensity of noise and $\sigma= \{\sigma_k \}_{k\in \Z^2_0} $ belongs to $\ell^2:= \ell^2(\Z^2_0)$, the space of square summable real sequences with the norm $\|\sigma \|_{\ell^2}= \big(\sum_k \sigma_k^2 \big)^{1/2}$; here, $\Z^2_0= \Z^2 \setminus \{0 \}$. We shall mainly consider those $\sigma$ with compact support, namely, there are only finitely many $k\in \Z^2_0$ such that $\sigma_k \neq 0$, and assume that $\sigma$ verifies the symmetry property:
  \begin{equation}\label{sigma-symmetry}
  \sigma_k =\sigma_l \quad \mbox{for all } k,l\in \Z^2_0 \mbox{ with } |k|=|l| .
  \end{equation}
The family $\{W^k\}_{k\in \Z^2_0}$ consists of independent standard complex Brownian motions, satisfying
  \begin{equation}\label{joint-variation}
  [W^k, W^l]_t= 2t \delta_{k,-l},\quad k,l\in \Z^2_0,
  \end{equation}
where $[\cdot, \cdot]$ is the joint quadratic variation. Finally, the family of divergence free vector fields $\{e_k\}_{k\in \Z^2_0}$ on $\T^2$ is given by
  $$e_k(x)= e^{2\pi ik\cdot x} \begin{cases}
  \displaystyle \frac{k^\perp}{|k|}, & k\in \Z^2_+ , \\
  \displaystyle -\frac{k^\perp}{|k|}, & k\in \Z^2_-,
  \end{cases}
  \qquad x\in \T^2, $$
where $\Z^2_0= \Z^2_+ \cup \Z^2_-$ is a partition of $\Z^2_0$ such that $\Z^2_+ = -\Z^2_-$. The noise $\eta(t,x)$ has the covariance matrix (use \eqref{joint-variation})
  $$Q(x,y) = \E [\eta(t,x)\otimes \eta(t,y)] =  \frac{2\nu}{\|\sigma \|_{\ell^2}^2} \sum_k \sigma_k^2 \frac{k^\perp\otimes k^\perp}{|k|^2} e^{2\pi ik\cdot (x-y)}; $$
in particular,
  \begin{equation}\label{convariance}
  Q(x,x) = \frac{2\nu}{\|\sigma \|_{\ell^2}^2} \sum_k \sigma_k^2 \frac{k^\perp\otimes k^\perp}{|k|^2} = \nu I_2,
  \end{equation}
where $I_2$ is the unit matrix of order 2.

Now, the stochastic equations considered in this paper can be written more explicitly as
  \begin{equation}\label{stoch-2D-Boussinesq-1}
  \left\{\aligned
  & \d \theta + u\cdot\nabla \theta\,\d t = \kappa\Delta \theta \,\d t + \frac{\sqrt{2\nu}}{\|\sigma \|_{\ell^2}} \sum_k \sigma_k e_k \cdot\nabla \theta\circ \d W^k_t,\\
  & d\omega + u\cdot\nabla \omega\,\d t = \partial_1\theta\,\d t + \frac{\sqrt{2\nu}}{\|\sigma \|_{\ell^2}} \sum_k \sigma_k e_k \cdot\nabla \omega\circ \d W^k_t,
  \endaligned \right.
  \end{equation}
where $u= K\ast \omega$ and $\circ$ means that the stochastic differential is understood in the Stratonovich sense. When the diffusion coefficient $\kappa=0$, the local existence of regular solutions to \eqref{stoch-2D-Boussinesq-1} was proved in \cite{Alonson-Leon} under suitable conditions on the noise. By \eqref{convariance}, it is not difficult to show that the equations \eqref{stoch-2D-Boussinesq-1} have the It\^o form below:
  \begin{equation}\label{stoch-2D-Boussinesq-Ito}
  \left\{\aligned
  & \d \theta + u\cdot\nabla \theta\,\d t = (\kappa+\nu)\Delta \theta \,\d t + \frac{\sqrt{2\nu}}{\|\sigma \|_{\ell^2}} \sum_k \sigma_k e_k \cdot\nabla \theta\, \d W^k_t,\\
  & d\omega + u\cdot\nabla \omega\,\d t = (\partial_1\theta + \nu \Delta\omega)\,\d t + \frac{\sqrt{2\nu}}{\|\sigma \|_{\ell^2}} \sum_k \sigma_k e_k \cdot\nabla \omega\, \d W^k_t,
  \endaligned \right.
  \end{equation}
with $ u= K\ast \omega$. We remark that, although the Laplacian operator appears in the vorticity equation above, the second equation in \eqref{stoch-2D-Boussinesq-Ito} is not dissipative since it is equivalent to the original Stratonovich equation in \eqref{stoch-2D-Boussinesq-1}.

\begin{remark}
It was proved in \cite{Danchin-Paicu} that \eqref{2D-Boussinesq-1} is globally well posed for Yudovich type initial data, namely, the vorticity $\omega_0$ is bounded. It would be nice to extend this result to the stochastic setting, namely, proving that \eqref{stoch-2D-Boussinesq-1} admits a unique global solution for bounded initial vorticity, similarly to \cite{BFM} which extends Yudovich's result for 2D Euler equations to the stochastic case.
\end{remark}

As in the deterministic case, the system \eqref{stoch-2D-Boussinesq-Ito} with $L^2$-initial data has a weak solution $(\theta, \omega)$ such that, $\P$-a.s., $\theta\in L^\infty \big(0,T; L^2(\T^2) \big) \cap L^2 \big(0,T; H^1(\T^2) \big)$ and $\omega\in L^\infty \big(0,T; L^2(\T^2)\big)$. The solution is weak in both analytic sense and probabilistic sense; see Section 2 for the definition and existence of solutions to \eqref{stoch-2D-Boussinesq-Ito}. Motivated by the recent papers \cite{FlaLuoAoP, Galeati, FGL, LuoSaal}, we will prove that, for any suitably chosen sequence $\{\sigma^N \}_{N\geq 1} \subset \ell^2$, the martingale part in the stochastic 2D Boussinesq system \eqref{stoch-2D-Boussinesq-Ito} will vanish in a scaling limit and what we obtain is the deterministic system of viscous Boussinesq equations:
  \begin{equation}\label{viscid-2D-Boussinesq}
  \left\{\aligned
  & \partial_t\theta + u\cdot\nabla \theta = (\kappa+\nu)\Delta \theta,\\
  & \partial_t \omega + u\cdot\nabla \omega = \partial_1\theta + \nu \Delta\omega,
  \endaligned \right.
  \end{equation}
where $ u= K\ast \omega$. It is well known that, for any initial data $(\theta_0,\omega_0) \in (L^2(\T^2))^2$, the system \eqref{viscid-2D-Boussinesq} admits a unique global solution, see e.g. Theorem \ref{thm-uniqueness} for a proof.

Before stating the main result of this paper, we introduce some notations. In the sequel, $L^2(\T^2)$ and, more generally, the Sobolev spaces $H^s(\T^2)\, (s\in \R)$ are assumed to consisting of functions with zero average; sometimes they will be simply written as $L^2$ and $H^s$. $\|\cdot \|_{L^2}$ and $\|\cdot \|_{H^s}$ will be the norms in $L^2(\T^2)$ and $H^s(\T^2)\, (s\in \R)$, respectively. For $(\theta_0,\omega_0) \in (L^2(\T^2))^2$, define $\|(\theta_0,\omega_0) \|_{L^2} = \|\theta_0 \|_{L^2} \vee \|\omega_0 \|_{L^2}$; moreover, given $\sigma \in \ell^2$, we denote by
  $$\mathcal C_\sigma(\theta_0, \omega_0) = \mbox{collection of laws of weak solutions } (\theta, \omega) \mbox{ to \eqref{stoch-2D-Boussinesq-Ito} with initial data } (\theta_0,\omega_0).$$
Moreover, we define the space
  \begin{equation}\label{space-support}
  \mathcal X= \big( L^2(0,T, L^2(\T^2)) \cap C(0,T, H^-(\T^2)) \big) \times C(0,T, H^-(\T^2))
  \end{equation}
endowed with the usual norm $\|\cdot \|_{\mathcal X}$ of product space. Here, $H^-(\T^2)=\cap_{s<0} H^s(\T^2)$.  Finally, we denote by $\Phi_\cdot (\theta_0,\omega_0)$ the unique solution to \eqref{viscid-2D-Boussinesq} with initial data $(\theta_0,\omega_0) \in (L^2(\T^2))^2$.

\begin{theorem}[Scaling limit] \label{main-thm}
Assume that the family $\{\sigma^N \}_{N\geq 1} \subset \ell^2$ satisfies \eqref{sigma-symmetry} and
  \begin{equation}\label{main-thm.1}
  \lim_{N\to \infty} \frac{\|\sigma^N\|_{\ell^\infty}}{\|\sigma^N\|_{\ell^2}} =0,
  \end{equation}
where $\|\sigma^N\|_{\ell^\infty}= \sup_{k} |\theta^N_k|$. Then, for any $R>0$ and any $\eps>0$, we have
  $$\lim_{N\to \infty} \sup_{\|(\theta_0,\omega_0) \|_{L^2} \leq R} \sup_{Q \in \mathcal C_{\sigma^N}(\theta_0, \omega_0)} Q \big(\{\varphi \in \mathcal X: \|\varphi - \Phi_\cdot (\theta_0,\omega_0) \|_{\mathcal X} >\eps \} \big) =0. $$
\end{theorem}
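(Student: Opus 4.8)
The plan is to prove the uniform statement by contradiction, reducing it to a scaling limit with converging initial data. Suppose the conclusion fails; then there exist $\eps,\delta>0$, a subsequence $N_j\to\infty$, initial data $(\theta_0^j,\omega_0^j)$ with $\|(\theta_0^j,\omega_0^j)\|_{L^2}\le R$, and laws $Q_j\in\mathcal C_{\sigma^{N_j}}(\theta_0^j,\omega_0^j)$ such that $Q_j(\{\|\varphi-\Phi_\cdot(\theta_0^j,\omega_0^j)\|_{\mathcal X}>\eps\})\ge\delta$ for all $j$. Since the ball of radius $R$ in $(L^2)^2$ is weakly compact, I may assume $(\theta_0^j,\omega_0^j)\rightharpoonup(\bar\theta_0,\bar\omega_0)$ weakly in $(L^2)^2$. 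The proof then rests on three pillars: (i) uniform a priori bounds and tightness of $\{Q_j\}$ on $\mathcal X$; (ii) every limit point of $\{Q_j\}$ is the Dirac mass at the unique deterministic solution $\Phi_\cdot(\bar\theta_0,\bar\omega_0)$; and (iii) continuity of the deterministic solution map under weak-$L^2$ convergence of the data, i.e.\ $\Phi_\cdot(\theta_0^j,\omega_0^j)\to\Phi_\cdot(\bar\theta_0,\bar\omega_0)$ in $\mathcal X$. Granting these, for large $j$ the target $\Phi_\cdot(\theta_0^j,\omega_0^j)$ lies within $\eps/2$ of $\bar\Phi:=\Phi_\cdot(\bar\theta_0,\bar\omega_0)$ in $\mathcal X$, so the bad events are contained in $\{\|\varphi-\bar\Phi\|_{\mathcal X}>\eps/2\}$, whose $Q_j$-probability tends to $\delta_{\bar\Phi}(\{\|\varphi-\bar\Phi\|_{\mathcal X}>\eps/2\})=0$, contradicting $\ge\delta$.

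For the a priori bounds I would apply It\^o's formula to $\|\theta_t\|_{L^2}^2$ and $\|\omega_t\|_{L^2}^2$ using the It\^o form \eqref{stoch-2D-Boussinesq-Ito}. The transport terms $u\cdot\nabla$ and the martingale integrands are energy-neutral because $e_k$ and $u$ are divergence free; the dissipation $(\kappa+\nu)\Delta\theta$ yields $\theta\in L^\infty(0,T;L^2)\cap L^2(0,T;H^1)$ with bounds depending only on $\|\theta_0\|_{L^2}$. In the vorticity equation the drift $\nu\Delta\omega$ is exactly cancelled by the quadratic variation of the transport martingale (as noted after \eqref{stoch-2D-Boussinesq-Ito}), so only the buoyancy coupling $2\langle\omega_s,\partial_1\theta_s\rangle$ survives; bounding it by $\|\omega_s\|_{L^2}\|\nabla\theta_s\|_{L^2}$ and using the $H^1$-control of $\theta$, a Gronwall argument gives $\E\sup_{t\le T}\|\omega_t\|_{L^2}^2\le C(R,T)$. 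Crucially all these constants are independent of $N$ and of the particular weak solution. Fractional-in-time regularity of the drift parts together with a Burkholder--Davis--Gundy/Kolmogorov estimate for the martingale parts, combined with these bounds, yields via the Aubin--Lions--Simon and Ascoli criteria the tightness of $\{Q_j\}$ on $\mathcal X$: strong $L^2(0,T;L^2)$ compactness for $\theta$, and $C(0,T;H^-)$ compactness for both components.

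The core of the argument is the identification of the limit. Fixing a smooth mean-zero test function $\phi$ and writing the martingale part of the $\theta$-equation tested against $\phi$ as $M^{N,\theta}_t(\phi)$, integration by parts, the covariance rule \eqref{joint-variation}, the symmetry \eqref{sigma-symmetry} and $e_{-k}=\overline{e_k}$ give the quadratic variation bound
\[
  [M^{N,\theta}(\phi)]_t \le \frac{4\nu\,\|\sigma^N\|_{\ell^\infty}^2}{\|\sigma^N\|_{\ell^2}^2}\,\|\nabla\phi\|_{L^\infty}^2\int_0^t\|\theta_s\|_{L^2}^2\,\d s,
\]
where I used Bessel's inequality $\sum_k|\langle\theta_s,e_k\cdot\nabla\phi\rangle|^2\le\|\theta_s\nabla\phi\|_{L^2}^2$; by \eqref{main-thm.1} and the uniform $L^2$-bound this tends to $0$, and analogously for $\omega$. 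Hence along any weakly convergent subsequence the martingale terms disappear while the It\^o drifts of \eqref{stoch-2D-Boussinesq-Ito} persist, so any limit point is supported on weak solutions of the deterministic viscous system \eqref{viscid-2D-Boussinesq}. Passing to the limit in the nonlinearity is the one genuinely delicate analytic step: the term $u\cdot\nabla\omega=\div(u\omega)$ pairs the only weakly convergent $\omega$ with $u=K\ast\omega$, and it is resolved by upgrading the $C(0,T;H^-)$-compactness and the $L^\infty(0,T;L^2)$-bound to strong convergence of $u=K\ast\omega$ in $L^2(0,T;L^2)$, the Biot--Savart kernel gaining one derivative; the same applies to $u\cdot\nabla\theta$. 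Since \eqref{viscid-2D-Boussinesq} is uniquely solvable for $L^2$-data (Theorem \ref{thm-uniqueness}) and the limit equation is deterministic, the limit law must be $\delta_{\Phi_\cdot(\bar\theta_0,\bar\omega_0)}$, establishing pillar (ii).

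It remains to justify pillar (iii), the stability of the deterministic flow. Weak-$L^2$ convergence of the data implies strong convergence in $H^{-s}$ ($s>0$) by compact embedding, and the parabolic smoothing of \eqref{viscid-2D-Boussinesq} propagates this to convergence in $\mathcal X$: on each $[\tau,T]$ the solutions are uniformly bounded in higher norms, giving strong convergence there, while the contribution of $[0,\tau]$ is $O(\tau)$ uniformly in $j$; together with uniqueness this forces $\Phi_\cdot(\theta_0^j,\omega_0^j)\to\Phi_\cdot(\bar\theta_0,\bar\omega_0)$ in $\mathcal X$. I expect the main obstacle to be the nonlinear limit passage together with this uniformity in the initial data: one must ensure that both the compactness estimates and the vanishing-martingale estimate are genuinely uniform over the $L^2$-ball and over the whole family $\mathcal C_{\sigma^N}$ of (possibly non-unique) weak solutions, and that the velocity compactness is strong enough to close the weak formulation of the quadratic nonlinearity.
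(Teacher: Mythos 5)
Your proposal follows essentially the same route as the paper: the contradiction setup with weak $L^2$-compactness of the initial data, the pathwise energy estimates (including the exact cancellation of $\nu\Delta\omega$ against the It\^o correction), Simon-type tightness on $\mathcal X$, the vanishing of the martingale terms via Bessel's inequality and the ratio $\|\sigma^N\|_{\ell^\infty}/\|\sigma^N\|_{\ell^2}\to 0$, identification of the limit through the uniqueness of the deterministic viscous system, and the stability $\Phi_\cdot(\theta_0^j,\omega_0^j)\to\Phi_\cdot(\bar\theta_0,\bar\omega_0)$ in $\mathcal X$ — this is precisely the paper's Proposition \ref{prop-weak-convergence} combined with its proof of Theorem \ref{main-thm}, with only cosmetic differences (your portmanteau/Dirac-mass phrasing in place of the paper's ``weak convergence to a deterministic limit implies convergence in probability'', and your parabolic-smoothing justification of the stability step, which the paper leaves as ``easy to see''). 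The caveats you flag at the end (uniformity over all weak solutions in $\mathcal C_{\sigma^N}$ and strong velocity compactness for the nonlinearity) are handled the same way in the paper, so the proposal is correct.
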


A simple example for the condition \eqref{main-thm.1} is given here: for a constant $\beta\in [0,\infty)$, let
  $$\sigma^N_k = \begin{cases}
  \displaystyle \frac1{|k|^\beta} \textbf{1}_{\{|k|\leq N\}}, & \mbox{if } \beta\in [0,1], \\
  \displaystyle \frac1{|k|^\beta} \textbf{1}_{\{N\leq |k|\leq 2N\}}, & \mbox{if } \beta\in (1,\infty),
  \end{cases}
  \qquad k\in \Z^2_0. $$
The reason for the difference in definitions is that $\sum_k \frac1{|k|^{2\beta}} <\infty$ if $\beta>1$, thus we need $\|\sigma^N \|_{\ell^\infty} \to 0$ as $N\to \infty$, which is ensured by setting $\sigma^N_k =0$ for all $|k|< N$. Theorem \ref{main-thm} will be proved in Section 3.

\begin{remark}
\begin{itemize}
\item[(i)] Following \cite{LuoSaal}, one can generalize the result of  Theorem \ref{main-thm} to Boussinesq type equations:
  \begin{equation*}
  \left\{\aligned
  & \d \theta + u\cdot\nabla \theta\,\d t = \kappa\Delta \theta \,\d t + \frac{\sqrt{2\nu}}{\|\sigma \|_{\ell^2}} \sum_k \sigma_k e_k \cdot\nabla \theta\circ \d W^k_t,\\
  & d\omega + u\cdot\nabla \omega\,\d t = \partial_1\theta\,\d t + \frac{\sqrt{2\nu}}{\|\sigma \|_{\ell^2}} \sum_k \sigma_k e_k \cdot\nabla \omega\circ \d W^k_t, \\
  & u= K_\eps \ast \omega,
  \endaligned \right.
  \end{equation*}
where, for some $\eps\in (0,1)$, $K_\eps$ is the kernel corresponding to the operator $-\nabla^\perp (-\Delta)^{-(1+\eps)/2}$, and $K_1$ coincides with the Biot-Savart kernel. The limit equations will be
  $$ \left\{\aligned
  & \partial_t\theta + u\cdot\nabla \theta = (\kappa+\nu) \Delta \theta,\\
  & \partial_t \omega + u\cdot\nabla \omega = \partial_1\theta + \nu \Delta\omega,\\
  & u= K_\eps \ast \omega.
  \endaligned \right. $$
\item[(ii)] On the contrary, it is not clear to the author how to treat the system \eqref{stoch-2D-Boussinesq-1} with zero diffusivity, namely, $\kappa =0$.
\end{itemize}
\end{remark}

The above limit result has some interesting consequences which have been briefly discussed at the beginning of the introduction; here, we explain in more detail the approximate weak uniqueness of the stochastic 2D Boussinesq system \eqref{stoch-2D-Boussinesq-1}. To the author's knowledge, for $L^2$-initial data, it is still an open question whether the uniqueness of solutions holds for the deterministic 2D Boussinesq system \eqref{2D-Boussinesq-1}. Although we cannot prove that transport noises have enough regularizing effect so that the stochastic 2D Boussinesq equations \eqref{stoch-2D-Boussinesq-1} admit a unique solution in the $L^2$-setting, the above theorem gives us a uniquely solvable system in the limit. As a result, distances between the laws of weak solutions to \eqref{stoch-2D-Boussinesq-1} will vanish as $N\to \infty$. To be more precise, recall that $\mathcal C_{\sigma^N}(\theta_0, \omega_0)$ is the collection of laws $Q_N$ of weak solutions $(\theta_N, \omega_N)$ to \eqref{stoch-2D-Boussinesq-1} with initial data $(\theta_0, \omega_0) \in (L^2(\T^2))^2$, which can be viewed as probability measures on $\mathcal X$ defined in \eqref{space-support}. Let $\rho(\cdot, \cdot)$ be a distance on $\mathcal X$ metrizing the weak convergence of probability measures on $\mathcal X$. Then, from Theorem \ref{main-thm} it is not difficult to deduce that
  $$\sup_{Q_N, Q'_N \in \mathcal C_{\sigma^N}(\theta_0, \omega_0)} \rho(Q_N, Q'_N) \to 0 \quad \mbox{as } N\to \infty. $$
Therefore, it is reasonable to say that transport noises approximatively regularize the 2D Boussinesq equations.

The paper is organized as follows. In Section 2, we give the meaning of weak solutions to \eqref{stoch-2D-Boussinesq-Ito} and provide a relatively sketched proof of existence of solutions. Section 3 is devoted to the proof of Theorem \ref{main-thm}, by following the method of \cite[Section 3]{LuoSaal}. Finally, for the reader's convenience, we give in Section 4 a proof of uniqueness of solutions to the deterministic viscous Boussinesq system \eqref{viscid-2D-Boussinesq}.

\section{Existence of weak solutions to \eqref{stoch-2D-Boussinesq-Ito}}

In this section we show that, for any $L^2$-initial data $(\theta_0, \omega_0)$, the stochastic 2D Boussinesq equations \eqref{stoch-2D-Boussinesq-Ito} have a weak solution with some desired regularity properties. First, we introduce a few more notations. $C^\infty(\T^2)$ is the space of smooth functions on $\T^2$.  We write $\<\cdot, \cdot\>$ for the inner product in $L^2(\T^2)$. For $p, q\in [1,\infty]$, we shall write $\|\cdot \|_{L^p(L^q)}$ for the norm in $L^p(0,T; L^q(\T^2))$. Recall that the Biot-Savart kernel $K$ can be viewed as a bounded linear operator from $H^s(\T^2)$ to $H^{s+1}(\T^2)$. In the sequel we use $\Xi$ to denote probability spaces since the notation $\omega$ has been reserved for vorticity.

We first explain the meaning of weak solutions to \eqref{stoch-2D-Boussinesq-Ito}.

\begin{definition}\label{2-def}
Given $(\theta_0, \omega_0)\in ( L^2(\T^2))^2$, we say that the stochastic 2D Boussinesq system \eqref{stoch-2D-Boussinesq-Ito} has a weak solution with initial data $(\theta_0, \omega_0)$ if there exists a filtered probability $(\Xi, \F, (\F_t),\P)$, a family of independent $(\F_t)$-Brownian motions $\{W^k_t \}_{k\in \Z^2_0}$, and a pair of $(\F_t)$-progressively measurable processes $\{(\theta_t, \omega_t) \}_{t\in [0,T]}$  such that
\begin{itemize}
\item[\rm(a)] one has $\theta, \omega \in L^2\big(\Xi, L^2(0,T; L^2(\T^2)) \big)$;

\item[\rm(b)] for any $\phi\in C^\infty(\T^2)$, $\P$-a.s. for all $t\in [0,T]$, the following equalities hold:
  \begin{equation}\label{2-def.1}
  \aligned
  \<\theta_t,\phi\> &= \<\theta_0,\phi\> + \int_0^t \big[ \<\theta_s, u_s\cdot \nabla\phi \> + (\kappa+\nu)\<\theta_s, \Delta \phi \> \big]\,\d s \\
  &\quad - \frac{\sqrt{2\nu}}{\|\sigma \|_{\ell^2}} \sum_k \sigma_k \int_0^t \<\theta_s, e_k\cdot \nabla\phi \>\,\d W^k_s,
  \endaligned
  \end{equation}
  \begin{equation}\label{2-def.2}
  \aligned
  \<\omega_t,\phi\> &= \<\omega_0,\phi\> + \int_0^t \big[ \<\omega_s, u_s\cdot \nabla\phi \> - \<\theta_s, \partial_1\phi\> + \nu\<\omega_s, \Delta \phi \> \big]\,\d s \\
  &\quad - \frac{\sqrt{2\nu}}{\|\sigma \|_{\ell^2}} \sum_k \sigma_k \int_0^t \<\omega_s, e_k\cdot \nabla\phi \>\,\d W^k_s,
  \endaligned
  \end{equation}
where $u_s= K\ast \omega_s,\, s\in [0,T]$.
\end{itemize}
\end{definition}

It is clear that, thanks to the properties in (a), all the terms in \eqref{2-def.1} and \eqref{2-def.2} make sense. For instance, by the It\^o isometry and \eqref{joint-variation},
  $$\aligned
  \E\bigg(\Big| \frac{\sqrt{2\nu}}{\|\sigma \|_{\ell^2}} \sum_k \sigma_k \int_0^t \<\theta_s, e_k\cdot \nabla\phi \>\,\d W^k_s \Big|^2 \bigg) &= \frac{4\nu}{\|\sigma \|_{\ell^2}^2} \E \bigg(\sum_k \sigma_k^2 \int_0^t |\<\theta_s, e_k\cdot \nabla\phi \>|^2 \,\d s \bigg) \\
  &\leq \frac{4\nu}{\|\sigma \|_{\ell^2}^2} \sum_k \sigma_k^2\, \E\int_0^t \|\theta_s\|_{L^2}^2 \|e_k\cdot \nabla\phi \|_{L^2}^2 \,\d s \\
  &\leq 4\nu \|\nabla\phi\|_{L^2}^2\, \E\int_0^T \|\theta_s\|_{L^2}^2 \,\d s <\infty,
  \endaligned $$
which implies that the martingale part in \eqref{2-def.1} is square integrable. Similar result holds for \eqref{2-def.2}. The main result of this section gives the existence of weak solutions to \eqref{stoch-2D-Boussinesq-Ito}.

\begin{theorem}\label{thm-existence}
For any $(\theta_0, \omega_0)\in ( L^2(\T^2))^2$, the stochastic 2D Boussinesq system \eqref{stoch-2D-Boussinesq-Ito} admits a weak solution $(\theta,\omega)$ in the sense of Definition \ref{2-def}. Moreover, the processes $\theta$ and $\omega$ have trajectories in $L^\infty(0,T; L^2)\cap L^2(0,T; H^1)$ and in $L^\infty(0,T; L^2)$, respectively, and
  $$ \|\theta \|_{L^\infty(L^2)} \vee \|\nabla \theta \|_{L^2(L^2)}\leq (1\vee \kappa^{-1/2}) \|\theta_0 \|_{L^2},\quad \|\omega \|_{L^\infty(L^2)} \leq \|\theta_0 \|_{L^2} + C_{\kappa, T} \|\omega_0 \|_{L^2}, $$
where $C_{\kappa, T}$ is some constant depending only on $\kappa$ and $T$.
\end{theorem}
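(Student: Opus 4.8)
The plan is to construct the weak solution by a Galerkin (spectral truncation) scheme, derive uniform a priori bounds, establish tightness of the laws, and pass to the limit via the Skorokhod representation theorem. Concretely, let $\Pi_N$ denote the projection onto the span of the Fourier modes $\{e^{2\pi i k\cdot x}: 0<|k|\le N\}$, and consider the finite dimensional It\^o system obtained by projecting \eqref{stoch-2D-Boussinesq-Ito}, with $u^N=K\ast\omega^N$ and the nonlinear and noise terms cut off by $\Pi_N$. Since $\Pi_N$ preserves the divergence free structure and the zero average condition, the truncated coefficients are locally Lipschitz on the finite dimensional phase space, so the projected system has a unique local strong solution, which the a priori bounds below extend to a global one.

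The a priori estimates are the heart of the argument and rest on the Stratonovich-to-It\^o correction encoded in \eqref{convariance}. Applying It\^o's formula to $\|\theta^N_t\|_{L^2}^2$, the transport drift $\<\theta^N,u^N\cdot\nabla\theta^N\>$ vanishes because $u^N$ is divergence free and the martingale term has zero expectation; crucially, the quadratic variation of the transport noise contributes exactly $+2\nu\|\nabla\theta^N\|_{L^2}^2\,\d t$, which by \eqref{convariance} cancels the extra $\nu$ in the drift $(\kappa+\nu)\Delta\theta^N$. This yields the identity $\E\|\theta^N_t\|_{L^2}^2+2\kappa\int_0^t\E\|\nabla\theta^N_s\|_{L^2}^2\,\d s=\|\Pi_N\theta_0\|_{L^2}^2$, hence the stated control of $\|\theta\|_{L^\infty(L^2)}$ and $\|\nabla\theta\|_{L^2(L^2)}$. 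For $\omega^N$ the same cancellation removes the apparent $\nu\Delta\omega^N$ dissipation, leaving $\frac{\d}{\d t}\E\|\omega^N_t\|_{L^2}^2=2\E\<\omega^N_t,\partial_1\theta^N_t\>$; bounding the right-hand side by $\|\omega^N_t\|_{L^2}\|\nabla\theta^N_t\|_{L^2}$ and combining Gronwall's inequality with the $\theta$-estimate gives the claimed bound on $\|\omega\|_{L^\infty(L^2)}$. All of these bounds are uniform in $N$.

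Next I would establish tightness in $\mathcal X$. The uniform estimates bound $\theta^N$ in $L^2(0,T;H^1)\cap L^\infty(0,T;L^2)$ and $\omega^N$ in $L^\infty(0,T;L^2)$, hence $u^N=K\ast\omega^N$ in $L^\infty(0,T;H^1)$ because $K$ maps $L^2$ into $H^1$. For time regularity I would control the increments $\theta^N_t-\theta^N_s$ and $\omega^N_t-\omega^N_s$ in a negative Sobolev norm: the drift terms are uniformly bounded in $H^{-2}$, and a moment estimate on the finite dimensional stochastic integrals, using the It\^o isometry bound already exhibited after Definition \ref{2-def}, yields $\E\|\theta^N_t-\theta^N_s\|_{H^{-\alpha}}^p\le C|t-s|^{p/2}$ for some $\alpha>0$ and $p>2$, and similarly for $\omega^N$. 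Interpolating the spatial regularity against this time regularity through an Aubin--Lions--Simon type criterion gives relative compactness of the laws of $\theta^N$ in $L^2(0,T;L^2)\cap C(0,T;H^-)$ and of $\omega^N$ in $C(0,T;H^-)$; the same argument applied to $u^N$, whose extra spatial derivative comes from the Biot--Savart kernel, gives strong compactness of $u^N$ in $L^2(0,T;L^2)$. By Prokhorov's theorem the joint laws on $\mathcal X$ (together with the driving noises) are tight, and the Skorokhod representation theorem provides a new probability space carrying almost surely convergent copies $(\tilde\theta^N,\tilde\omega^N)\to(\tilde\theta,\tilde\omega)$ in $\mathcal X$ with $\tilde u^N\to\tilde u$ strongly in $L^2(0,T;L^2)$.

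The final step is to pass to the limit in the weak formulation \eqref{2-def.1}--\eqref{2-def.2}, and I expect the nonlinear term to be the main obstacle. The quadratic term $\int_0^t\<\omega^N,u^N\cdot\nabla\phi\>\,\d s$ cannot be treated by weak convergence of $\omega^N$ alone; the resolution is a weak--strong pairing, with $\tilde\omega^N\rightharpoonup\tilde\omega$ weakly in $L^2(0,T;L^2)$ from the uniform bound and $\tilde u^N\to\tilde u$ strongly in $L^2(0,T;L^2)$, so that $\tilde u^N\cdot\nabla\phi\to\tilde u\cdot\nabla\phi$ strongly and the product passes to the limit. The smoothing of the Biot--Savart kernel is precisely what upgrades the weak compactness of the vorticity to the strong compactness of the velocity needed here; the analogous term in the $\theta$ equation is easier since $\tilde\theta^N\to\tilde\theta$ strongly in $L^2(0,T;L^2)$. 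For the stochastic integrals I would invoke a standard convergence lemma for It\^o integrals, or equivalently identify the limit through a martingale problem; this is transparent when $\sigma$ has compact support, so that the noise is finite dimensional, while the general $\ell^2$ case is covered by the $L^2$-bounds recorded after Definition \ref{2-def}. Passing to the limit then shows that $(\tilde\theta,\tilde\omega)$ solves \eqref{2-def.1}--\eqref{2-def.2} on the new stochastic basis, and lower semicontinuity of the norms under the above convergences transfers the uniform a priori bounds to the limit, which completes the proof.
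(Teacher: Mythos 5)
Your overall architecture coincides with the paper's: Galerkin truncation as in \eqref{Boussinesq-Galerkin}, energy estimates exploiting the cancellation between the It\^o correction and the extra $\nu\Delta$ in the drift (via \eqref{convariance}), fractional-Sobolev-in-time increment bounds combined with Simon's compactness criteria, Prohorov plus Skorokhod, and a weak--strong passage to the limit in the nonlinearity using the smoothing of the Biot--Savart kernel. However, there is one genuine gap: you derive the energy estimates only \emph{in expectation}, while the theorem asserts \emph{trajectory-wise} ($\P$-a.s.) bounds, and the latter do not follow from the former. The key observation you miss is that the martingale term in $\d\|\theta^N\|_{L^2}^2$ is not merely mean-zero but vanishes identically: $\<\theta^N,\Pi_N(e_k\cdot\nabla\theta^N)\>=\<\theta^N,e_k\cdot\nabla\theta^N\>=0$ by integration by parts, since $\nabla\cdot e_k=0$ and $\theta^N\in H_N$. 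This is what gives the paper (Lemma \ref{lem-apriori}) the pathwise identity $\d\|\theta^N\|_{L^2}^2=-2\kappa\|\nabla\theta^N\|_{L^2}^2\,\d t$ and hence the a.s. bounds \eqref{bounds-Galerkin}; from your identity $\E\|\theta^N_t\|_{L^2}^2+2\kappa\int_0^t\E\|\nabla\theta^N_s\|_{L^2}^2\,\d s=\|\Pi_N\theta_0\|_{L^2}^2$ one cannot conclude $\|\theta\|_{L^\infty(L^2)}\leq\|\theta_0\|_{L^2}$ a.s., since mean bounds give no almost-sure control of the supremum in time.

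This is not a cosmetic issue, because the pathwise bound is load-bearing elsewhere in your own argument: inside the BDG moment estimate for the increments you invoke the a.s. bound $\|\theta^N_r\|_{L^2}\leq\|\theta_0\|_{L^2}$ (exactly as in Lemma \ref{lem-increments}), and the almost-sure uniform bounds are what survive the Skorokhod change of probability space via equality of laws and are then used in the scaling-limit argument (cf. \eqref{prop-weak-convergence.4}). Your Gronwall step for $\omega^N$ has the same defect in expectation form: $\E\big[\|\omega^N_t\|_{L^2}\|\nabla\theta^N_t\|_{L^2}\big]$ does not factor, whereas pathwise no Gronwall is needed at all --- the same cancellation of the martingale and It\^o-correction terms yields $\d\|\omega^N\|_{L^2}^2=2\<\omega^N,\partial_1\theta^N\>\,\d t$, hence $\d\|\omega^N\|_{L^2}\leq\|\nabla\theta^N\|_{L^2}\,\d t$, and direct integration with Cauchy--Schwarz in time gives $\|\omega^N_t\|_{L^2}\leq\|\omega_0\|_{L^2}+\sqrt{T}\,\|\nabla\theta^N\|_{L^2(L^2)}$. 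The repair is immediate and leaves the rest of your proof (tightness with $\alpha$ large enough, the $\E\|\theta^N_t-\theta^N_s\|_{H^{-\alpha}}^4\lesssim|t-s|^2$ bound, and the limit identification) intact: simply note the identical vanishing of the martingale terms and run the energy computation pathwise.
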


The rest of this section is devoted to the proof of Theorem \ref{thm-existence}. The arguments are by now classical, and make use of the Galerkin approximation and compactness method, see e.g. \cite{FlaGat95, FGL}. First we give some a priori estimates.

\begin{lemma}\label{lem-apriori}
The following estimates hold $\P$-a.s. for all $t\in [0,T]$:
  $$\|\theta_t \|_{L^2}^2 + \kappa \int_0^t \|\nabla \theta_s \|_{L^2}^2\,\d s \leq \|\theta_0 \|_{L^2}^2, \quad \|\omega_t \|_{L^2} \leq \|\omega_0 \|_{L^2} + C_{\kappa, T} \|\theta_0 \|_{L^2},$$
where $C_{\kappa, T}$ is some constant depending only on $\kappa, T$.
\end{lemma}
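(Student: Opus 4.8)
The plan is to derive the two a priori estimates by applying the It\^o formula to $\|\theta_t\|_{L^2}^2$ and $\|\omega_t\|_{L^2}^2$, using the crucial fact that the transport noise is energy-preserving. Working with the It\^o form \eqref{stoch-2D-Boussinesq-Ito}, I would first treat the temperature equation. Applying the It\^o formula to the function $\theta \mapsto \|\theta\|_{L^2}^2$ gives
  $$\d\|\theta_t\|_{L^2}^2 = 2\big[ \<\theta_t, -u_t\cdot\nabla\theta_t\> + (\kappa+\nu)\<\theta_t, \Delta\theta_t\>\big]\,\d t + (\mbox{quadratic variation})\,\d t + (\mbox{martingale}).$$
The transport term $\<\theta_t, u_t\cdot\nabla\theta_t\>$ vanishes because $u_t$ is divergence free, and $\<\theta_t, \Delta\theta_t\> = -\|\nabla\theta_t\|_{L^2}^2$. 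The key point is that the It\^o correction term, coming from the quadratic variation of $\frac{\sqrt{2\nu}}{\|\sigma\|_{\ell^2}}\sum_k \sigma_k e_k\cdot\nabla\theta$, should produce exactly $+2\nu\|\nabla\theta_t\|_{L^2}^2$, which cancels the extra viscosity $\nu$ coming from the It\^o-Stratonovich conversion; this is precisely why the noise is energy-preserving despite the $(\kappa+\nu)\Delta$ in the It\^o form. After this cancellation one is left with $\d\|\theta_t\|_{L^2}^2 = -2\kappa\|\nabla\theta_t\|_{L^2}^2\,\d t + (\mbox{martingale})$, and integrating then yields the first estimate once the martingale is controlled.

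For the vorticity equation the argument is analogous but with the additional forcing term $\partial_1\theta$. Applying It\^o to $\|\omega_t\|_{L^2}^2$, the transport term again vanishes by incompressibility, the noise correction cancels the $\nu\Delta\omega$ term exactly as above, and one is left with the contribution of the buoyancy forcing:
  $$\d\|\omega_t\|_{L^2}^2 = 2\<\omega_t, \partial_1\theta_t\>\,\d t + (\mbox{martingale}).$$
Estimating $2|\<\omega_t, \partial_1\theta_t\>| \leq 2\|\omega_t\|_{L^2}\|\partial_1\theta_t\|_{L^2} \leq 2\|\omega_t\|_{L^2}\|\nabla\theta_t\|_{L^2}$ and using the already-established bound $\int_0^T\|\nabla\theta_s\|_{L^2}^2\,\d s \leq \kappa^{-1}\|\theta_0\|_{L^2}^2$, one obtains by Gr\"onwall's inequality a bound of the form $\|\omega_t\|_{L^2} \leq \|\omega_0\|_{L^2} + C_{\kappa,T}\|\theta_0\|_{L^2}$. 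Indeed, writing $f(t) = \|\omega_t\|_{L^2}$, the differential inequality $f(t)^2 \leq \|\omega_0\|_{L^2}^2 + 2\int_0^t f(s)\|\nabla\theta_s\|_{L^2}\,\d s$ can be handled by a standard comparison argument, and the Cauchy--Schwarz control of $\int_0^T\|\nabla\theta_s\|_{L^2}\,\d s \leq T^{1/2}\kappa^{-1/2}\|\theta_0\|_{L^2}$ produces the claimed constant.

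The main subtlety, which I would treat carefully rather than wave away, is the precise cancellation of the It\^o correction term against the additional $\nu\Delta$ dissipation. Expanding the quadratic variation using the It\^o isometry together with \eqref{joint-variation} and the covariance identity \eqref{convariance} (namely $Q(x,x) = \nu I_2$), the correction term is $\frac{2\nu}{\|\sigma\|_{\ell^2}^2}\sum_k \sigma_k^2 \|e_k\cdot\nabla\theta_t\|_{L^2}^2$; the symmetry \eqref{sigma-symmetry} and the structure of $\{e_k\}$ are what make this collapse to $2\nu\|\nabla\theta_t\|_{L^2}^2$ after summation. A second point requiring attention is that, strictly speaking, these computations are rigorous at the level of the Galerkin approximations where all objects are smooth and finite-dimensional, so the estimates should first be derived for the approximating sequence $(\theta^n, \omega^n)$ and then passed to the limit by lower semicontinuity of the norms; since the bounds obtained are uniform in $n$, this causes no difficulty. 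Finally, to go from the $L^2(\Xi)$-type bounds to the $\P$-a.s. pathwise statements claimed in the lemma, I would observe that the local martingale parts have zero mean and, after localization, take expectations to obtain the estimates in expectation, then use the fact that the right-hand sides are deterministic to upgrade to the almost-sure bounds valid for all $t\in[0,T]$ via continuity of trajectories.
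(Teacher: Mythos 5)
Your overall route coincides with the paper's: It\^o formula applied to $\|\theta_t\|_{L^2}^2$ and $\|\omega_t\|_{L^2}^2$, vanishing of the transport terms by incompressibility, exact cancellation of the It\^o correction $\frac{4\nu}{\|\sigma\|_{\ell^2}^2}\sum_k\sigma_k^2\|e_k\cdot\nabla\theta\|_{L^2}^2 = 2\nu\|\nabla\theta\|_{L^2}^2$ (via the covariance identity \eqref{convariance} and the symmetry \eqref{sigma-symmetry}) against the extra $\nu\Delta$ from the It\^o form, and then Cauchy--Schwarz on the buoyancy term $2\langle\omega,\partial_1\theta\rangle$ together with $\int_0^T\|\nabla\theta_s\|_{L^2}\,\d s \leq \sqrt{T/\kappa}\,\|\theta_0\|_{L^2}$. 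Your remark that the computation is rigorous at the Galerkin level is also consistent with how the lemma is used in the paper.

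However, your final step contains a genuine gap. You carry an unresolved ``(martingale)'' term through the energy identities and propose to localize, take expectations, and then ``upgrade'' to the $\P$-a.s.\ statement ``using the fact that the right-hand sides are deterministic'' and continuity of trajectories. This inference is invalid: a bound $\E\|\theta_t\|_{L^2}^2 \leq \|\theta_0\|_{L^2}^2$ on the mean never implies the pointwise bound $\|\theta_t\|_{L^2}^2 \leq \|\theta_0\|_{L^2}^2$ almost surely, no matter how regular the paths are. The missing observation --- and the precise reason the lemma can assert pathwise estimates at all --- is that the martingale parts vanish \emph{identically}: testing the equation against the solution itself gives
\begin{equation*}
\langle \theta, e_k\cdot\nabla\theta\rangle = \frac12\int_{\T^2} e_k\cdot\nabla(\theta^2)\,\d x = 0
\end{equation*}
since $\nabla\cdot e_k = 0$, and likewise $\langle \omega, e_k\cdot\nabla\omega\rangle = 0$. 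Hence there is no stochastic integral left: one has the pathwise identities $\d\|\theta\|_{L^2}^2 = -2\kappa\|\nabla\theta\|_{L^2}^2\,\d t$ and $\d\|\omega\|_{L^2}^2 = 2\langle\omega,\partial_1\theta\rangle\,\d t$, and both estimates follow by direct integration $\P$-a.s., exactly as in the paper. Your ``energy-preserving'' heuristic is the right intuition, but it must be implemented as this exact vanishing of the martingale, not as an expectation argument. Once this is repaired, your comparison treatment of $f(t)^2 \leq \|\omega_0\|_{L^2}^2 + 2\int_0^t f(s)\|\nabla\theta_s\|_{L^2}\,\d s$ is correct, though the paper's shortcut $\d\|\omega\|_{L^2} \leq \|\nabla\theta\|_{L^2}\,\d t$ is more direct and immediately yields $C_{\kappa,T} = \sqrt{T/\kappa}$; in particular no Gr\"onwall lemma is needed.
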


\begin{proof}
Recall the first equation in \eqref{stoch-2D-Boussinesq-Ito}; by the It\^o formula and \eqref{joint-variation},
  $$\aligned
  \d \|\theta \|_{L^2}^2 &= 2\<\theta, -u\cdot\nabla\theta + (\kappa +\nu) \Delta\theta \>\,\d t +\frac{2\sqrt{2\nu}}{\|\sigma \|_{\ell^2}} \sum_k \sigma_k\<\theta, e_k \cdot\nabla \theta\>\, \d W^k_t \\
  &\quad + \frac{4\nu}{\|\sigma \|_{\ell^2}^2} \sum_k \sigma_k^2 \|e_k \cdot \nabla\theta \|_{L^2}^2\,\d t \\
  &= -2(\kappa +\nu) \|\nabla \theta \|_{L^2}^2\,\d t + \frac{4\nu}{\|\sigma \|_{\ell^2}^2} \sum_k \sigma_k^2 \|e_k \cdot \nabla\theta \|_{L^2}^2\,\d t,
  \endaligned $$
where in the second step we have used integration by parts and the facts that $\nabla\cdot u = \nabla\cdot e_k=0$. By a similar computation as in \eqref{convariance}, we have
  $$ \frac{4\nu}{\|\sigma \|_{\ell^2}^2} \sum_k \sigma_k^2 \|e_k \cdot \nabla\theta \|_{L^2}^2=  \frac{4\nu}{\|\sigma \|_{\ell^2}^2} \|\sigma \|_{\ell^2}^2 \|\nabla\theta \|_{L^2}^2 =2\nu \|\nabla\theta \|_{L^2}^2, $$
and thus $\d \|\theta \|_{L^2}^2 = -2\kappa \|\nabla \theta \|_{L^2}^2\,\d t$. This immediately gives us the first estimate.

In the same way, using the second equation in  \eqref{stoch-2D-Boussinesq-Ito}, we have
  $$\aligned
  \d \|\omega \|_{L^2}^2 &= 2\<\omega, -u\cdot\nabla\omega + \partial_1 \theta + \nu \Delta\omega\>\,\d t +\frac{2\sqrt{2\nu}}{\|\sigma \|_{\ell^2}} \sum_k \sigma_k\<\omega, e_k \cdot\nabla \omega\>\, \d W^k_t \\
  &\quad + \frac{4\nu}{\|\sigma \|_{\ell^2}^2} \sum_k \sigma_k^2 \|e_k \cdot \nabla\omega \|_{L^2}^2\,\d t \\
  &= 2\<\omega, \partial_1 \theta \>\,\d t.
  \endaligned $$
The Cauchy inequality leads to
  $$\d \|\omega \|_{L^2}^2 \leq 2 \|\omega \|_{L^2} \|\partial_1 \theta \|_{L^2}\,\d t\leq 2 \|\omega \|_{L^2} \|\nabla \theta \|_{L^2}\,\d t , $$
and thus $\d \|\omega \|_{L^2} \leq \|\nabla \theta \|_{L^2}\,\d t$. Therefore,
  $$\|\omega_t \|_{L^2} \leq \|\omega_0 \|_{L^2} + \int_0^t \|\nabla \theta_s \|_{L^2}\,\d s \leq \|\omega_0 \|_{L^2} +  \sqrt{T} \|\nabla \theta \|_{L^2(L^2)}, $$
where $\|\cdot \|_{L^2(L^2)}$ is the norm in $L^2\big(0,T; L^2(\T^2)\big)$. Combining this inequality with the first estimate, we complete the proof.
\end{proof}

Let $H_N$ be the finite dimensional subspace of $L^2(\T^2)$ spanned by $e^{2\pi ik\cdot x}, 0<|k|\leq N$ and $\Pi_N:L^2(\T^2) \to H_N$ the orthogonal projection. We consider the Galerkin approximation of the system \eqref{stoch-2D-Boussinesq-Ito}:
  \begin{equation}\label{Boussinesq-Galerkin}
  \left\{ \aligned
  & \d \theta^N + \Pi_N(u^N\cdot \nabla \theta^N) \,\d t = (\kappa + \nu)\Delta \theta^N \,\d t + \frac{\sqrt{2\nu}}{\|\sigma \|_{\ell^2}} \sum_k \sigma_k \Pi_N(e_k \cdot\nabla \theta^N) \, \d W^k_t, \\
  & \d \omega^N + \Pi_N(u^N\cdot \nabla \omega^N) \,\d t = (\partial_1 \theta^N + \nu\Delta \omega^N)\,\d t + \frac{\sqrt{2\nu}}{\|\sigma \|_{\ell^2}} \sum_k \sigma_k \Pi_N(e_k \cdot\nabla \omega^N) \, \d W^k_t
  \endaligned \right.
  \end{equation}
with the initial data $\theta^N_0 = \Pi_N\theta_0$ and $\omega^N_0 = \Pi_N\omega_0$. Here, $u^N= K\ast \omega^N$. Using Lemma \ref{lem-apriori}, we obtain the following uniform bounds: for all $N\geq 1$, $\P$-a.s. for all $t\in [0,T]$,
  \begin{equation}\label{bounds-Galerkin}
  \|\theta^N_t \|_{L^2}^2 + \kappa \int_0^t \|\nabla \theta^N_s \|_{L^2}^2\,\d s \leq \|\theta_0 \|_{L^2}^2, \quad \|\omega^N_t \|_{L^2} \leq \|\omega_0 \|_{L^2} + C_{\kappa, T} \|\theta_0 \|_{L^2},
  \end{equation}
where $C_{\kappa, T}$ is independent of $N$. It follows that the family $\{(\theta^N, \omega^N) \}_{N\geq 1}$ is bounded in
  $$ L^\infty\big( \Xi, L^\infty(0,T, L^2(\T^2)) \cap L^2(0,T, H^1(\T^2))\big) \times L^\infty\big( \Xi, L^\infty(0,T, L^2(\T^2))\big). $$
As a result, we can find a subsequence $\{(\theta^{N_i}, \omega^{N_i}) \}_{i\geq 1}$ which is weakly convergent in the above space. In order to show that the weak limit solves the equations \eqref{2-def.1} and \eqref{2-def.2}, we need stronger convergence result.

Let $\eta_N$ be the joint law of $(\theta^N, \omega^N),\, N\geq 1$; we want to show that the family $\{\eta_N \}_{N\geq 1}$ is tight on the following space
  $$\big( L^2(0,T, L^2(\T^2)) \cap C(0,T, H^-(\T^2)) \big) \times C(0,T, H^-(\T^2)). $$
Denote by $P_N$ (resp. $Q_N$) the law of $\theta^N$ (resp. $\omega^N$), $N\geq 1$; it is sufficient to show that the two families $\{P_N \}_{N\geq 1}$ and $\{Q_N \}_{N\geq 1}$ are tight respectively on $L^2(0,T, L^2(\T^2)) \cap C(0,T, H^-(\T^2))$ and on $C(0,T, H^-(\T^2))$. For this purpose, we will apply Simon's compactness theorems (cf. \cite{Simon}) which makes use of time fractional Sobolev space. For $\gamma\in (0,1)$, $p>1$ and a normed linear space $(Y, \|\cdot \|_Y)$, the fractional Sobolev space $W^{\gamma, p}(0,T; Y)$ consists of those functions $\varphi\in L^p(0,T; Y)$ such that
  $$\int_0^T\!\int_0^T \frac{\|\varphi(t)- \varphi(s)\|_{Y}^p}{|t-s|^{1+\gamma p}}\,\d t\d s <\infty. $$
In the following we take $Y= H^{-\alpha}$ with $\alpha>5$, a choice due to the calculations in the proof of Corollary \ref{cor-tightness-2}.

\begin{proposition}\label{prop-Simon}
\begin{itemize}
\item[\rm(i)] For any $\gamma\in (0, 1/2)$, we have the compact embedding:
  $$L^2\big(0,T; H^1 \big) \cap W^{\gamma,2} \big(0,T; H^{-\alpha} \big) \subset L^2\big(0,T; L^2\big).$$
\item[\rm(ii)] The following embedding is compact:
  $$L^\infty \big(0,T; L^2\big) \cap W^{1/3,4} \big(0,T; H^{-\alpha} \big) \subset C([0,T], H^- ). $$
\end{itemize}
\end{proposition}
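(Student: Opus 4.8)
The plan is to obtain both embeddings as instances of Simon's compactness criteria from \cite{Simon}, each built on a triple of Sobolev spaces $X \hookrightarrow B \hookrightarrow Y$ in which the first embedding is compact and the second continuous. On the torus these are supplied by the Rellich--Kondrachov theorem: on $\T^2$ the embedding $H^a \hookrightarrow H^b$ is compact whenever $a>b$ and continuous whenever $a\ge b$. Once the triple is fixed, Simon's theorem reduces compactness in the time-Bochner space to a uniform control of time-translates, and this control is exactly what a bound in the fractional space $W^{\gamma,p}(0,T;Y)$ provides.

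For part (i) I would choose $X=H^1$, $B=L^2$ and $Y=H^{-\alpha}$, so that $H^1\hookrightarrow L^2$ is compact and $L^2\hookrightarrow H^{-\alpha}$ is continuous. A set bounded in $L^2(0,T;H^1)$ for which $\sup_f \|f(\cdot+h)-f(\cdot)\|_{L^2(0,T-h;H^{-\alpha})}\to 0$ as $h\to 0$ is then relatively compact in $L^2(0,T;L^2)$; a uniform bound in $W^{\gamma,2}(0,T;H^{-\alpha})$ delivers exactly this equicontinuity of translates for every $\gamma>0$, since the fractional seminorm controls $\int_0^\delta h^{-1-2\gamma}\|f(\cdot+h)-f(\cdot)\|_{L^2(0,T-h;H^{-\alpha})}^2\,\d h$. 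Hence the restriction $\gamma\in(0,1/2)$ is more than sufficient and the stated embedding is compact.

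For part (ii) the task is to upgrade $L^p$-in-time compactness to compactness in $C$-in-time, for which Simon's criterion demands that the fractional exponent beat the reciprocal of the integrability, i.e. $s-1/r>0$, so that $W^{s,r}(0,T;Y)$ embeds into $C(0,T;Y)$ in the time variable. This is precisely why the pair $(s,r)=(1/3,4)$ is used: $1/3-1/4=1/12>0$, whereas no exponent $\gamma<1/2$ achieves this with $r=2$. Fixing $s_0\in(-\alpha,0)$ and taking $X=L^2$, $B=H^{s_0}$, $Y=H^{-\alpha}$ (compact and continuous embeddings respectively), Simon's theorem gives relative compactness in $C(0,T;H^{s_0})$ of any set bounded in $L^\infty(0,T;L^2)\cap W^{1/3,4}(0,T;H^{-\alpha})$. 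Since $H^-=\cap_{s<0}H^s$ is metrizable as the countable intersection $\cap_n H^{-1/n}$, relative compactness in $C(0,T;H^-)$ follows from that in each $C(0,T;H^{-1/n})$ by a diagonal argument, which completes the proof.

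The computations are routine once the correct form of Simon's theorem is identified; the only genuine point of care is matching the time-regularity exponents to the target space. The main obstacle -- really a design choice -- is that continuity in time in part (ii) forces the strict inequality $s-1/r>0$, which is incompatible with the pair $(\gamma,2)$, $\gamma<1/2$, available from the $L^2$-martingale estimates used for part (i); the choice $r=4$, $s=1/3$ is the device reconciling the time regularity the stochastic integral can deliver with the embedding into continuous paths.
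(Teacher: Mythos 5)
Your proof is correct and follows essentially the same route as the paper: part (i) is Simon's $L^p$-compactness criterion (his Corollary 5) with the triple $H^1\subset L^2\subset H^{-\alpha}$, and part (ii) is Simon's criterion for compactness in $C([0,T];B)$ (his Corollary 9) using $1/3-1/4>0$, followed by exhausting $H^-=\cap_{s<0}H^s$ over the scale $H^{-\delta}$. The only cosmetic difference is that you invoke the $L^\infty(0,T;L^2)$ bound directly while the paper passes through $L^p(0,T;L^2)$ for $p$ large enough (which its $L^\infty$ bound supplies), so the interpolation condition $s_\theta>1/r_\theta$ in Simon's corollary is met the same way in both arguments.
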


\begin{proof}
Assertion (i) is a direct consequence of \cite[page 86, Corollary 5]{Simon}. Next, we deduce from \cite[page 90, Corollary 9]{Simon} that, for any fixed $\delta>0$, the inclusion
  $$L^p \big(0,T; L^2\big) \cap W^{1/3,4} \big(0,T; H^{-\alpha} \big) \subset C\big([0,T], H^{-\delta} \big)$$
is compact for all $p$ big enough. This implies the second assertion since $\delta>0$ is arbitrary.
\end{proof}

For $N\geq 1$, let $\xi_N$ be a stochastic process with trajectories in $L^\infty(0,T; L^2) \cap L^\infty(0,T; H^1)$ and denote its law by $L_N$. One can deduce the following results from Proposition \ref{prop-Simon}.

\begin{corollary}\label{cor-tightness}
\begin{itemize}
\item[\rm(i)] If there is $C>0$ such that for all $N\geq 1$,
  \begin{equation}\label{cor-tightness.1}
  \E\int_0^T \|\xi_N(t) \|_{H^1}^2\,\d t + \E \int_0^T\!\int_0^T \frac{\|\xi_N(t)- \xi_N(s) \|_{H^{-\alpha}}^2}{|t-s|^{1+2\gamma}} \,\d t\d s \leq C,
  \end{equation}
then $\{L_N \}_{N\in \N}$ is tight on $L^2(0,T; L^2)$.
\item[\rm(ii)] If for any $p>1$, there is $C_p>0$ such that for all $N\geq 1$,
  \begin{equation}\label{cor-tightness.2}
  \E\int_0^T \|\xi_N(t) \|_{L^2}^p \,\d t + \E \int_0^T\!\int_0^T \frac{\|\xi_N(t)- \xi_N(s) \|_{H^{-\alpha}}^4 }{|t-s|^{7/3}} \,\d t\d s \leq C_p ,
  \end{equation}
then $\{L_N \}_{N\in \N}$ is tight on $C([0,T], H^{-} )$.
\end{itemize}
\end{corollary}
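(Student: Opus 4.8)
The plan is to deduce both assertions from the compact embeddings of Proposition~\ref{prop-Simon} via the Chebyshev inequality and Prokhorov's criterion. Recall that, on a Polish space $Z$, a family of probability measures $\{L_N\}$ is tight precisely when for every $\eps>0$ there is a compact set $K_\eps \subset Z$ with $\inf_N L_N(K_\eps) \geq 1-\eps$. In each case I would take $K_\eps$ to be the closure of a sublevel set of exactly the functional appearing on the left-hand side of the moment hypothesis: Proposition~\ref{prop-Simon} guarantees that such a sublevel set is relatively compact in the target space, while Chebyshev's inequality together with the uniform-in-$N$ moment bound makes its measure close to one.

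For (i), set
$$\mathcal N(\varphi) = \int_0^T \|\varphi(t)\|_{H^1}^2\,\d t + \int_0^T\!\int_0^T \frac{\|\varphi(t)-\varphi(s)\|_{H^{-\alpha}}^2}{|t-s|^{1+2\gamma}}\,\d t\,\d s,$$
which is the squared norm of $L^2(0,T;H^1)\cap W^{\gamma,2}(0,T;H^{-\alpha})$. By Proposition~\ref{prop-Simon}(i) the sublevel set $\{\mathcal N \leq R\}$ is relatively compact in $L^2(0,T;L^2)$ for every $R>0$. Hypothesis \eqref{cor-tightness.1} says $\E[\mathcal N(\xi_N)] \leq C$ uniformly in $N$, so Chebyshev gives $\P(\mathcal N(\xi_N)>R) \leq C/R$; taking $R = C/\eps$ and $K_\eps = \overline{\{\mathcal N \leq R\}}$ yields $L_N(K_\eps) \geq 1-\eps$ for all $N$, hence tightness. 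For (ii) I would run the identical argument against Proposition~\ref{prop-Simon}(ii), replacing $\mathcal N$ by the sum of the two integrals on the left of \eqref{cor-tightness.2}, namely the $p$-th power of the $L^p(0,T;L^2)$ norm plus the fourth power of the $W^{1/3,4}(0,T;H^{-\alpha})$ seminorm (note $1+\tfrac13\cdot4 = \tfrac73$). Since a sum of two nonnegative quantities bounded by $R$ forces each summand to be $\leq R$, the corresponding sublevel set is a bounded subset of $L^p(0,T;L^2)\cap W^{1/3,4}(0,T;H^{-\alpha})$, hence relatively compact in $C([0,T],H^-)$; Chebyshev with \eqref{cor-tightness.2} then gives $L_N(K_\eps) \geq 1 - C_p/R \geq 1-\eps$.

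The one point requiring care is the target space in (ii). Proposition~\ref{prop-Simon}(ii) only produces compactness into each $C([0,T],H^{-\delta})$ with $\delta>0$, whereas the conclusion is stated in $C([0,T],H^-)$ with $H^- = \cap_{s<0}H^s$. I would recall that $H^-$ carries the projective-limit (Fréchet) topology generated by the seminorms $\|\cdot\|_{H^{-\delta}}$, so that a subset of $C([0,T],H^-)$ is relatively compact exactly when it is relatively compact in $C([0,T],H^{-\delta})$ for every $\delta>0$; thus the arbitrariness of $\delta$ in Proposition~\ref{prop-Simon}(ii) is precisely what lets the single sublevel set serve as the compact set in the projective limit. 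Everything else is a routine application of Chebyshev's inequality, and I expect no substantive obstacle beyond correctly bookkeeping this topology.
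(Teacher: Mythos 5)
Your treatment of part (i) is correct and is exactly the argument the paper intends (the paper states the corollary without proof as a direct consequence of Proposition \ref{prop-Simon}): the left side of \eqref{cor-tightness.1} is the squared norm of $L^2(0,T;H^1)\cap W^{\gamma,2}(0,T;H^{-\alpha})$, its sublevel sets are relatively compact in $L^2(0,T;L^2)$ by Proposition \ref{prop-Simon}(i), and Chebyshev plus Prokhorov finishes.

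Part (ii), however, has a genuine gap: you use a \emph{single} exponent $p$, and Proposition \ref{prop-Simon}(ii) does not apply to a fixed-$p$ sublevel set. The proposition is stated for $L^\infty(0,T;L^2)\cap W^{1/3,4}(0,T;H^{-\alpha})$, and its proof via Simon's Corollary 9 gives compactness of $L^p(0,T;L^2)\cap W^{1/3,4}(0,T;H^{-\alpha})\subset C([0,T],H^{-\delta})$ only for $p$ \emph{large enough depending on} $\delta$: with the interpolation $[L^2,H^{-\alpha}]_\theta = H^{-\theta\alpha}$, Simon's condition $s_\theta>1/r_\theta$ reads $\theta/3-\theta/4>(1-\theta)/p$, i.e. $p>12(1-\theta)/\theta$, which blows up as $\delta=\theta\alpha\to 0$. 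So your sublevel set $\{\|\varphi\|_{L^p(L^2)}^p+[\varphi]_{W^{1/3,4}(H^{-\alpha})}^4\le R\}$ is relatively compact only in $C([0,T],H^{-\delta})$ for $\delta$ above a positive threshold of order $\alpha/p$; for smaller $\delta$ it is not even bounded there. Concretely, tent-shaped spikes $\varphi_n(t)=\tau_n^{-1/p}\,\phi(t/\tau_n)\,f_{k_n}$ with $|k_n|=n$, $\tau_n=n^{-\beta}$ and $p\delta<\beta<12p\alpha/(p+12)$ (a nonempty window precisely when $\delta<12\alpha/(p+12)$) stay bounded in $L^p(0,T;L^2)\cap W^{1/3,4}(0,T;H^{-\alpha})$, since the $W^{1/3,4}$ seminorm scales like $\tau_n^{-1/p-1/12}n^{-\alpha}$, while $\sup_t\|\varphi_n(t)\|_{H^{-\delta}}=n^{\beta/p-\delta}\to\infty$. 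Hence your closing claim that ``the arbitrariness of $\delta$ in Proposition \ref{prop-Simon}(ii) lets the single sublevel set serve'' is where the argument fails: the arbitrariness of $\delta$ is available only at the price of $L^\infty$-in-time bounds or of $p$ growing with $1/\delta$. This is also why \eqref{cor-tightness.2} assumes a bound for \emph{every} $p>1$ with constants $C_p$; a proof that uses only one $p$ cannot be right, since it would prove a stronger statement than is true.

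The repair is routine and uses the full hypothesis: choose $\delta_j\downarrow 0$ and $p_j\uparrow\infty$ with $p_j>12(\alpha-\delta_j)/\delta_j$, set $\mathcal N_j(\varphi)=\int_0^T\|\varphi(t)\|_{L^2}^{p_j}\,\d t+\int_0^T\!\int_0^T \|\varphi(t)-\varphi(s)\|_{H^{-\alpha}}^4 |t-s|^{-7/3}\,\d t\,\d s$, and apply Chebyshev to each $\mathcal N_j$ at level $R_j:=2^{j+1}C_{p_j}/\eps$. Taking $K_\eps$ to be the closure in $C([0,T],H^-)$ of $\bigcap_{j\ge 1}\{\mathcal N_j\le R_j\}$ gives $L_N(K_\eps)\ge 1-\sum_j 2^{-(j+1)}\eps\ge 1-\eps$ uniformly in $N$, and this intersection is bounded in $L^{p_j}(0,T;L^2)\cap W^{1/3,4}(0,T;H^{-\alpha})$ for every $j$, hence relatively compact in every $C([0,T],H^{-\delta_j})$, hence in $C([0,T],H^-)$ by exactly the diagonal/projective-limit argument you sketched at the end (that part of your proposal is correct, since countably many $\delta_j\downarrow 0$ generate the Fr\'echet topology of $H^-$).
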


In order to apply Corollary \ref{cor-tightness} to the laws of the processes $\{\theta^N \}_{N\geq 1}$ and $\{\omega^N \}_{N\geq 1}$, we need to estimate the $H^{-4}$-norm of the increments $\theta^N_t -\theta^N_s$ and $\omega^N_t -\omega^N_s$, $0\leq s<t \leq T$. This is done in the following lemma.

\begin{lemma}\label{lem-increments}
Denote by $f_k(x)= e^{2\pi i k\cdot x},\, x\in \T^2, k\in \Z^2_0$. There exists $C= C_{\kappa, \nu, T, \|\theta_0 \|_{L^2}, \|\omega_0 \|_{L^2}}>0$, independent of $N$, such that for any $k\in \Z^2_0$ and $0\leq s< t\leq T$, it holds
  $$\E\big(|\<\theta^N_t -\theta^N_s, f_k \>|^4 \big) \vee \E\big(|\<\omega^N_t -\omega^N_s, f_k \>|^4 \big) \leq C|k|^8 (t-s)^2 . $$
\end{lemma}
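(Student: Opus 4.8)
The plan is to test the Galerkin system \eqref{Boussinesq-Galerkin} against the fixed Fourier mode $f_k$ and estimate the resulting drift and martingale contributions separately. First I would dispose of the easy range $|k|>N$: since $\theta^N_t,\omega^N_t\in H_N$ for every $t$, both inner products vanish identically and the claim is trivial. Hence I restrict to $0<|k|\le N$, where $f_k\in H_N$ renders the projection $\Pi_N$ transparent. Testing \eqref{Boussinesq-Galerkin} against $f_k$ and integrating by parts (using $\nabla\cdot u^N=\nabla\cdot e_j=0$) gives, for the temperature,
$$\langle\theta^N_t-\theta^N_s, f_k\rangle = -\int_s^t\langle\theta^N_r, u^N_r\cdot\nabla f_k\rangle\,\d r + (\kappa+\nu)\int_s^t\langle\theta^N_r, \Delta f_k\rangle\,\d r - \frac{\sqrt{2\nu}}{\|\sigma\|_{\ell^2}}\sum_j\sigma_j\int_s^t\langle\theta^N_r, e_j\cdot\nabla f_k\rangle\,\d W^j_r,$$
and an analogous identity for $\omega^N$ carrying the extra buoyancy drift $-\int_s^t\langle\theta^N_r,\partial_1 f_k\rangle\,\d r$ and the viscous drift $\nu\int_s^t\langle\omega^N_r,\Delta f_k\rangle\,\d r$. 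The key point is that every $x$-derivative falls on the smooth mode $f_k$, so $\nabla f_k=2\pi i k f_k$, $\Delta f_k=-4\pi^2|k|^2 f_k$, $\partial_1 f_k=2\pi i k_1 f_k$ produce explicit powers of $|k|$ while $\|f_k\|_{L^2}=1$.

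The drift terms I would estimate pathwise. The uniform bounds \eqref{bounds-Galerkin} give, $\P$-a.s. and independently of $N$, the \emph{deterministic} controls $\sup_r\|\theta^N_r\|_{L^2}\le\|\theta_0\|_{L^2}$ and $\sup_r\|\omega^N_r\|_{L^2}\le\|\omega_0\|_{L^2}+C_{\kappa,T}\|\theta_0\|_{L^2}$, and since $K$ gains one derivative, $\|u^N_r\|_{L^2}=\|K\ast\omega^N_r\|_{L^2}\le C\|\omega^N_r\|_{L^2}$ is likewise bounded. Using $\|u^N_r\cdot\nabla f_k\|_{L^2}\le 2\pi|k|\,\|u^N_r\|_{L^2}$ yields $|\langle\theta^N_r,u^N_r\cdot\nabla f_k\rangle|\le C|k|\,\|\theta^N_r\|_{L^2}\|\omega^N_r\|_{L^2}$, while $|\langle\theta^N_r,\Delta f_k\rangle|\le 4\pi^2|k|^2\|\theta^N_r\|_{L^2}$ and $|\langle\theta^N_r,\partial_1 f_k\rangle|\le 2\pi|k|\,\|\theta^N_r\|_{L^2}$. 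Integrating over $[s,t]$ and invoking Jensen's inequality in the form $\big|\int_s^t g\,\d r\big|^4\le (t-s)^3\int_s^t|g|^4\,\d r$ bounds each drift contribution by $C|k|^8(t-s)^4\le C_T|k|^8(t-s)^2$ (using $t-s\le T$); here the worst power $|k|^8$ arises exactly from the Laplacian term raised to the fourth power, matching the statement. All these bounds are deterministic, so taking expectations is immediate.

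For the martingale part I would apply the Burkholder--Davis--Gundy inequality $\E|M_t-M_s|^4\le C\,\E\big([M]_t-[M]_s\big)^2$. Exactly as in the It\^o-correction computation of Lemma \ref{lem-apriori}, which rests on \eqref{joint-variation} and \eqref{convariance}, the bracket density of the temperature martingale equals $\frac{4\nu}{\|\sigma\|_{\ell^2}^2}\sum_j\sigma_j^2|\langle\theta^N_r, e_j\cdot\nabla f_k\rangle|^2\,\d r$. The decisive observation, and the place where the noise normalization is used, is the \emph{uniform-in-}$j$ estimate $\|e_j\cdot\nabla f_k\|_{L^2}=2\pi\frac{|j^\perp\cdot k|}{|j|}\le 2\pi|k|$; consequently $\sum_j\sigma_j^2|\langle\theta^N_r, e_j\cdot\nabla f_k\rangle|^2\le 4\pi^2|k|^2\|\sigma\|_{\ell^2}^2\|\theta^N_r\|_{L^2}^2$, the factor $\|\sigma\|_{\ell^2}^{-2}$ cancels, and the bracket increment is at most $C\nu|k|^2(t-s)\sup_r\|\theta^N_r\|_{L^2}^2$, again deterministic. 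Squaring gives $\E|M_t-M_s|^4\le C|k|^4(t-s)^2\le C|k|^8(t-s)^2$, and the vorticity martingale is identical with $\omega^N$ replacing $\theta^N$. I would then combine the drift and martingale pieces through $|a+b+c+d|^4\le 4^3(|a|^4+|b|^4+|c|^4+|d|^4)$ and use $|k|\ge1$ to absorb all lower powers into $|k|^8$, obtaining the constant $C=C_{\kappa,\nu,T,\|\theta_0\|_{L^2},\|\omega_0\|_{L^2}}$ of the statement. I expect the only genuinely delicate step to be the martingale term: one must correctly read off the bracket from the complex Brownian structure \eqref{joint-variation} and notice the uniform-in-$j$ gradient bound that makes the $\ell^2$-normalization cancel, whereas everything else follows from the deterministic a priori bounds.
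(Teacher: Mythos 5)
Your proposal is correct and follows essentially the same route as the paper's proof: reduce trivially to $|k|\le N$, test the Galerkin system \eqref{Boussinesq-Galerkin} against $f_k$, bound the drift pathwise via the uniform estimates \eqref{bounds-Galerkin} together with $|\nabla f_k|=2\pi|k|$ and $|\Delta f_k|=4\pi^2|k|^2$, and control the martingale by the Burkholder--Davis--Gundy inequality using the uniform-in-$j$ bound $\|e_j\cdot\nabla f_k\|_{L^2}\le 2\pi|k|$, which cancels the $\|\sigma\|_{\ell^2}^{-2}$ normalization exactly as in the paper. The only blemish is a harmless sign slip in the transported drift term (integration by parts with $\nabla\cdot u^N=0$ gives $+\langle\theta^N_r,\,u^N_r\cdot\nabla f_k\rangle$, as in the paper), which is immaterial since only absolute values enter the estimates.
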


\begin{proof}
It suffices to consider $k\in \Z^2_0$ with $|k|\leq N$. By the first equation in \eqref{Boussinesq-Galerkin},
  $$\<\theta^N_t -\theta^N_s, f_k \> = \int_s^t \big[ \<\theta^N_r, u^N_r\cdot \nabla f_k \> + (\kappa+\nu)\<\theta^N_r, \Delta f_k \> \big]\,\d r - \frac{\sqrt{2\nu}}{\|\sigma \|_{\ell^2}} \sum_l \sigma_l \int_s^t \<\theta^N_r, e_l \cdot\nabla f_k\> \, \d W^l_r.$$
Therefore,
  $$\aligned
  \E\big(|\<\theta^N_t -\theta^N_s, f_k \>|^4 \big) &\leq C \E \bigg[ \Big|\int_s^t \big[ \<\theta^N_r, u^N_r\cdot \nabla f_k \> + (\kappa+\nu)\<\theta^N_r, \Delta f_k \> \big]\,\d r \Big|^4\bigg]\\
  &\quad + C \E \bigg[ \Big|\frac{\sqrt{2\nu}}{\|\sigma \|_{\ell^2}} \sum_l \sigma_l \int_s^t \<\theta^N_r, e_l \cdot\nabla f_k\> \, \d W^l_r \Big|^4\bigg].
  \endaligned $$
Using the bounds \eqref{bounds-Galerkin} and the facts $|\nabla f_k| = 2\pi |k|,\, |\Delta f_k| = 4\pi^2 |k|^2$, it is easy to show that
  $$\E \bigg[ \Big|\int_s^t \big[ \<\theta^N_r, u^N_r\cdot \nabla f_k \> + (\kappa+\nu)\<\theta^N_r, \Delta f_k \> \big]\,\d r \Big|^4\bigg] \leq C_{\kappa, \nu, T, \|\theta_0 \|_{L^2}, \|\omega_0 \|_{L^2}} |k|^8 (t-s)^4. $$
Next, by the Burkholder-Daves-Gundy inequality and the estimate $\|\theta^N_r\|_{L^2} \leq \|\theta_0\|_{L^2}$,
  $$\aligned
  \E \bigg[ \Big|\frac{\sqrt{2\nu}}{\|\sigma \|_{\ell^2}} \sum_l \sigma_l \int_s^t \<\theta^N_r, e_l \cdot\nabla f_k\> \, \d W^l_r \Big|^4\bigg] &\leq C \frac{\nu^2}{\|\sigma \|_{\ell^2}^4} \E \bigg[ \Big( \sum_l \sigma_l^2 \int_s^t |\<\theta^N_r, e_l \cdot\nabla f_k\>|^2 \, \d r \Big)^2\bigg] \\
  & \leq C \frac{\nu^2}{\|\sigma \|_{\ell^2}^4} \Big( \sum_l \sigma_l^2 \int_s^t \|\theta_0\|_{L^2}^2 \cdot 4\pi^2 |k|^2 \, \d r \Big)^2 \\
  &\leq C_{\nu, \|\theta_0\|_{L^2}} |k|^4 (t-s)^2.
  \endaligned $$
Summing up these arguments we obtain the estimate on the process $\theta^N$. In the same way, using the second equation in \eqref{Boussinesq-Galerkin} we can prove the estimate for $\omega^N$.
\end{proof}

Recall that $P_N$ (resp. $Q_N$) is the law of the process $\theta^N$ (resp. $\omega^N$), $N\geq 1$. With the above estimates in hand, we can prove the tightness of the laws $\{P_N\}_{N\geq 1}$ and  $\{Q_N\}_{N\geq 1}$.

\begin{corollary}\label{cor-tightness-2}
The family of laws $\{P_N\}_{N\geq 1}$ is tight on $ L^2(0,T, L^2(\T^2)) \cap C(0,T, H^-(\T^2)) $ and the family $\{Q_N\}_{N\geq 1}$ is tight on $C(0,T, H^-(\T^2)) $.
\end{corollary}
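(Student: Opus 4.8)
The plan is to verify the hypotheses of Corollary \ref{cor-tightness} separately for the two families. For $\{P_N\}_{N\geq 1}$ (the laws of $\theta^N$) I would check condition \eqref{cor-tightness.1} to obtain tightness on $L^2(0,T,L^2)$ and condition \eqref{cor-tightness.2} to obtain tightness on $C(0,T,H^-)$; for $\{Q_N\}_{N\geq 1}$ (the laws of $\omega^N$) only \eqref{cor-tightness.2} is available, since the bound in \eqref{bounds-Galerkin} controls $\|\omega^N\|_{L^2}$ but not $\|\omega^N\|_{H^1}$. Each condition splits into a ``static'' term and a time-increment term, which I treat using \eqref{bounds-Galerkin} and Lemma \ref{lem-increments} respectively.

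For the static terms, \eqref{bounds-Galerkin} gives $\E\int_0^T\|\theta^N_t\|_{H^1}^2\,\d t \leq (T+\kappa^{-1})\|\theta_0\|_{L^2}^2$ and, for any $p>1$, $\E\int_0^T\|\theta^N_t\|_{L^2}^p\,\d t\leq T\|\theta_0\|_{L^2}^p$; similarly $\E\int_0^T\|\omega^N_t\|_{L^2}^p\,\d t \leq T\big(\|\omega_0\|_{L^2}+C_{\kappa,T}\|\theta_0\|_{L^2}\big)^p$, all uniformly in $N$. This disposes of the first summands in \eqref{cor-tightness.1} and \eqref{cor-tightness.2}.

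For the increment terms I would convert the frequency-localized estimate of Lemma \ref{lem-increments} into a bound on the full $H^{-\alpha}$-norm. Writing $\|g\|_{H^{-\alpha}}^2 \leq C\sum_k |k|^{-2\alpha}|\<g,f_k\>|^2$, expanding the fourth power and applying Cauchy--Schwarz in the expectation together with Lemma \ref{lem-increments} yields
\[
\E\|\theta^N_t-\theta^N_s\|_{H^{-\alpha}}^4 \leq C(t-s)^2\Big(\sum_k |k|^{4-2\alpha}\Big)^2 \leq C'(t-s)^2,
\]
where the last sum converges because $\alpha>5$ (in two dimensions $\sum_k|k|^{-s}<\infty$ iff $s>2$, and here $2\alpha-4>2$); by Jensen, $\E\|\theta^N_t-\theta^N_s\|_{H^{-\alpha}}^2 \leq C(t-s)$. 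The identical estimates hold for $\omega^N$. Feeding these into the double integrals, the exponents work out: $\int_0^T\!\int_0^T |t-s|^{2-7/3}\,\d t\,\d s<\infty$ gives the second summand in \eqref{cor-tightness.2}, and $\int_0^T\!\int_0^T |t-s|^{1-(1+2\gamma)}\,\d t\,\d s<\infty$ (valid for $\gamma\in(0,1/2)$) gives that in \eqref{cor-tightness.1}. Corollary \ref{cor-tightness} then yields tightness of $\{P_N\}$ on $L^2(0,T,L^2)$ and on $C(0,T,H^-)$, and of $\{Q_N\}$ on $C(0,T,H^-)$.

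It remains to upgrade tightness of $\{P_N\}$ on each factor to tightness on the intersection $L^2(0,T,L^2)\cap C(0,T,H^-)$: given $\eps>0$ I would choose compacts $K_1,K_2$ in the two factors with $P_N(K_i^{\,c})<\eps/2$ for all $N$; since both spaces embed continuously into the space of distributions on $(0,T)\times\T^2$, the intersection $K_1\cap K_2$ is compact in the intersection space and satisfies $P_N((K_1\cap K_2)^c)<\eps$. The only genuinely delicate point in the whole argument is the third step---passing from the per-mode bound of Lemma \ref{lem-increments} to the $H^{-\alpha}$-norm of the increment---where the summability over $k$ forces $\alpha>3$ (hence the chosen $\alpha>5$), and where the resulting time scalings $(t-s)^2$ and $(t-s)$ are precisely strong enough to absorb the singular kernels $|t-s|^{-7/3}$ and $|t-s|^{-1-2\gamma}$; everything else is bookkeeping with \eqref{bounds-Galerkin}.
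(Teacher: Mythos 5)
Your proposal is correct and follows essentially the same route as the paper: verify the hypotheses of Corollary \ref{cor-tightness} using \eqref{bounds-Galerkin} for the static terms and Lemma \ref{lem-increments} for the $H^{-\alpha}$-increments, applying both parts (i) and (ii) to $\theta^N$ and only part (ii) to $\omega^N$. The only harmless variations are that you estimate the fourth moment by Cauchy--Schwarz across the double sum over frequencies (which needs only $\alpha>3$, while the paper's inequality $\big(\sum_k a_k b_k\big)^2 \leq \big(\sum_k a_k\big)\sum_k a_k b_k^2$ is what forces $\alpha>5$), you derive the second-moment increment bound from the fourth by Jensen rather than directly, and you spell out the gluing of compact sets on the intersection space $L^2(0,T,L^2)\cap C(0,T,H^-)$, a step the paper leaves implicit.
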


\begin{proof}
We only show the tightness of $\{P_N\}_{N\geq 1}$; the proof of the second assertion is easier. First, to show the tightness of $\{P_N\}_{N\geq 1}$ on $L^2(0,T, L^2(\T^2))$, by \eqref{bounds-Galerkin} and Corollary \ref{cor-tightness}(i), it suffices to consider the second expectation in \eqref{cor-tightness.1}. We have, by Lemma \ref{lem-increments},
  $$\aligned
  \E\big(\|\theta^N_t -\theta^N_s\|_{H^{-\alpha}}^2 \big) & = \E\Big(\sum_k \frac1{|k|^{2\alpha}} |\<\theta^N_t -\theta^N_s, f_k\>|^2 \Big) \leq \sum_k \frac1{|k|^{2\alpha}} C|k|^4 |t-s| \leq C_\alpha |t-s|
  \endaligned $$
since $\alpha>5$. Here, the constant $C_\alpha$ is independent of $N$. Therefore, for any $N\geq 1$,
  $$\E \int_0^T\!\int_0^T \frac{\|\theta^N_t- \theta^N_s \|_{H^{-\alpha}}^2}{|t-s|^{1+2\gamma}} \,\d t\d s \leq \int_0^T\!\int_0^T \frac{C_\alpha |t-s|}{|t-s|^{1+2\gamma}} \,\d t\d s \leq C'$$
since $\gamma<1/2$. This implies that $\{P_N\}_{N\geq 1}$ is tight on $L^2(0,T, L^2(\T^2))$.

Next, by Cauchy's inequality,
  $$\aligned
  \E\big(\|\theta^N_t -\theta^N_s\|_{H^{-\alpha}}^4 \big)&= \E\bigg[ \Big(\sum_k \frac1{|k|^{2\alpha}} |\<\theta^N_t -\theta^N_s, f_k\>|^2 \Big)^2 \bigg] \\
  &\leq \Big(\sum_k \frac1{|k|^{2\alpha}} \Big) \sum_k \frac1{|k|^{2\alpha}} \E \big(|\<\theta^N_t -\theta^N_s, f_k\>|^4 \big) \\
  &\leq C_\alpha \sum_k \frac1{|k|^{2\alpha}} C|k|^8 |t-s|^2 \leq C'_\alpha |t-s|^2,
  \endaligned $$
where we have used again the fact that $\alpha>5$. As a result,
  $$\E \int_0^T\!\int_0^T \frac{\|\theta^N_t- \theta^N_s \|_{H^{-\alpha}}^4}{|t-s|^{7/3}} \,\d t\d s \leq \int_0^T\!\int_0^T \frac{C'_\alpha |t-s|^2}{|t-s|^{7/3}} \,\d t\d s \leq C''. $$
The assertion (ii) of Corollary \ref{cor-tightness} implies the tightness of $\{P_N\}_{N\geq 1}$ on $C(0,T, H^-(\T^2)) $.
\end{proof}

Now recall that $\eta_N$ is the joint law of the pair of processes $(\theta^N, \omega^N),\, N\geq 1$. We conclude from Corollary \ref{cor-tightness-2} that the family $\{\eta_N \}_{N\geq 1}$ is tight on
  $$\mathcal X= \big( L^2(0,T, L^2(\T^2)) \cap C(0,T, H^-(\T^2)) \big) \times C(0,T, H^-(\T^2)). $$
The rest of the arguments is quite classical, and thus we only give a sketch here. By the Prohorov theorem (\cite[p.59, Theorem
5.1]{Billingsley}), we can find a subsequence $\{N_i \}_{i\geq 1}$ such that $\eta_{N_i}$ converges weakly as $i\to \infty$ to some probability measure $\eta$ supported on $\mathcal X$. Next, the Skorokhod representation theorem (\cite[p.70, Theorem 6.7]{Billingsley}) implies that there exist  a new probability space $\big(\tilde\Xi, \tilde\F, \tilde\P \big)$, a sequence of stochastic processes $\big(\tilde \theta^{N_i}, \tilde\omega^{N_i} \big)$ and a limit process $\big(\tilde \theta, \tilde\omega \big)$ defined on $\big(\tilde\Xi, \tilde\F, \tilde\P \big)$, such that
\begin{itemize}
\item[(1)] for any $i\geq 1$, the pair $\big(\tilde \theta^{N_i}, \tilde\omega^{N_i} \big)$ has the same law $\eta_{N_i}$ as $(\theta^{N_i}, \omega^{N_i} )$;
\item[(2)] $\tilde\P$-a.s., $\big(\tilde \theta^{N_i}, \tilde\omega^{N_i} \big)$ converges in the topology of
  $$\big( L^2(0,T, L^2(\T^2)) \cap C(0,T, H^-(\T^2)) \big) \times C(0,T, H^-(\T^2)) $$
  to the limit process $\big(\tilde \theta, \tilde\omega \big)$.
\end{itemize}

Combining the assertion (1) and the uniform bounds \eqref{bounds-Galerkin}, we conclude that, $\tilde\P$-a.s. for all $i\geq 1$ and $t\in [0,T]$,
  \begin{equation}\label{bounds-new-processes}
  \|\tilde\theta^{N_i}_t \|_{L^2}^2 + \kappa \int_0^t \|\nabla \tilde\theta^{N_i}_s \|_{L^2}^2\,\d s \leq \|\theta_0 \|_{L^2}^2, \quad \|\tilde\omega^{N_i}_t \|_{L^2} \leq \|\omega_0 \|_{L^2} + C_{\kappa, T} \|\theta_0 \|_{L^2}.
  \end{equation}
From these bounds, it is not difficult to show that (cf. \cite[Lemma 3.5]{FGL}) the limit processes $\tilde\theta$ and $\tilde \omega$ satisfy
  $$\tilde\P \mbox{-a.s.}, \quad \|\tilde\theta \|_{L^\infty(L^2)} \leq \|\theta_0 \|_{L^2} \quad \mbox{and} \quad \|\tilde\omega \|_{L^\infty(L^2)} \leq \|\theta_0 \|_{L^2} + C_{\kappa, T} \|\omega_0 \|_{L^2} . $$
Moreover, one can prove that the process $\tilde\theta$ is weakly differentiable in the spatial variable and
  $$\tilde\P \mbox{-a.s.}, \quad \|\nabla \tilde\theta \|_{L^2(L^2)} \leq \frac1{\sqrt \kappa} \|\theta_0 \|_{L^2}. $$
Therefore, the pair $\big(\tilde\theta ,\tilde\omega \big)$ verifies the second assertion in Theorem \ref{thm-existence}.

Next, let $\tilde u =K\ast \tilde \omega$ be the corresponding velocity field on the new probability space $\big(\tilde\Xi, \tilde\F, \tilde\P \big)$. Thanks to the convergence results in item (2) above, it is standard to show that the processes $\tilde\theta$, $\tilde\omega$ and $\tilde u$ satisfy the equations \eqref{2-def.1} and \eqref{2-def.2}. We remark that, for any $i\geq 1$, we need also to find a sequence of independent complex Brownian motions $\big\{\tilde W^{N_i,k} \big\}_{k\in \Z^2_0}$, as well as a family of limit Brownian motions $\big\{\tilde W^k \big\}_{k\in \Z^2_0}$, such that $\tilde \P$-a.s., $\tilde W^{N_i,k}$ converges to $\tilde W^k$ in the topology of $C([0,T],\mathbb C)$ for all $k\in \Z^2_0$. To this end, we can consider the law $\mathcal W$ of the family $\{W^k \}_{k\in \Z^2_0}$ together with $\eta_N,\, N\geq 1$. It is easy to show that the family $\{\eta_N\otimes \mathcal W\}_{N\geq 1}$ of joint laws is tight on some suitable space, and then we apply the Prohorov theorem and the Skorokhod theorem. We omit the details here, see the discussions above (3.8) in \cite{FGL}.

\section{Proof of Theorem \ref{main-thm}}

In this section we take a sequence $\{\sigma^N \}_{N\geq 1} \subset \ell^2$ such that, for all $N\geq 1$
  $$\sigma^N_k = \sigma^N_l \quad \mbox{for all } k,l\in \Z^2_0 \mbox{ with } |k|=|l|, $$
and
  \begin{equation}\label{key-property}
  \lim_{N\to \infty} \frac{\|\sigma^N \|_{\ell^\infty}}{\|\sigma^N \|_{\ell^2}} =0.
  \end{equation}
For any $N\geq 1$, we consider the equations
  \begin{equation}\label{stoch-2D-Boussinesq-approx}
  \left\{\aligned
  & \d \theta^N + u^N\cdot\nabla \theta^N\,\d t = (\kappa+\nu)\Delta \theta^N \,\d t + \frac{\sqrt{2\nu}}{\|\sigma^N \|_{\ell^2}} \sum_k \sigma^N_k e_k \cdot\nabla \theta^N\, \d W^k_t,\\
  & d\omega^N + u^N\cdot\nabla \omega^N\,\d t = (\partial_1\theta^N + \nu \Delta\omega^N)\,\d t + \frac{\sqrt{2\nu}}{\|\sigma^N \|_{\ell^2}} \sum_k \sigma^N_k e_k \cdot\nabla \omega^N\, \d W^k_t
  \endaligned \right.
  \end{equation}
with the initial data $(\theta^N_0, \omega^N_0) \in (L^2(\T^2))^2$. Here, $u^N =K\ast \omega^N$. By Theorem \ref{thm-existence}, the above system admits a weak solution $(\theta^N, \omega^N)$ in the sense of Definition \ref{2-def}, defined on some probability space $(\Xi, \mathcal F, \P)$; furthermore, we have the following assertions:
\begin{itemize}
\item[(a$'$)] for any $N\geq 1$, $\P$-a.s., one has
  \begin{equation}\label{estimates}
  \|\theta^N \|_{L^\infty(L^2)} \vee \|\nabla \theta^N \|_{L^2(L^2)}\leq (1\vee \kappa^{-1/2}) \|\theta^N_0 \|_{L^2},\quad \|\omega^N \|_{L^\infty(L^2)} \leq \|\theta^N_0 \|_{L^2} + C_{\kappa, T} \|\omega^N_0 \|_{L^2};
  \end{equation}
\item[\rm(b$'$)] for any $\phi\in C^\infty(\T^2)$, $\P$-a.s. for all $t\in [0,T]$, the following equalities hold:
  \begin{equation}\label{approx.1}
  \aligned
  \<\theta^N_t,\phi\> &= \<\theta^N_0,\phi\> + \int_0^t \big[ \<\theta^N_s, u^N_s\cdot \nabla\phi \> + (\kappa+\nu)\<\theta^N_s, \Delta \phi \> \big]\,\d s \\
  &\quad - \frac{\sqrt{2\nu}}{\|\sigma^N \|_{\ell^2}} \sum_k \sigma^N_k \int_0^t \<\theta^N_s, e_k\cdot \nabla\phi \>\,\d W^k_s,
  \endaligned
  \end{equation}
  \begin{equation}\label{approx.2}
  \aligned
  \<\omega^N_t,\phi\> &= \<\omega^N_0,\phi\> + \int_0^t \big[ \<\omega^N_s, u^N_s\cdot \nabla\phi \> - \<\theta^N_s, \partial_1\phi\> + \nu\<\omega^N_s, \Delta \phi \> \big]\,\d s \\
  &\quad - \frac{\sqrt{2\nu}}{\|\sigma^N \|_{\ell^2}} \sum_k \sigma^N_k \int_0^t \<\omega^N_s, e_k\cdot \nabla\phi \>\,\d W^k_s.
  \endaligned
  \end{equation}
\end{itemize}
We remark that the processes $(\theta^N, \omega^N)$ might be defined on different probability spaces, but for simplicity we do not distinguish the notations $\Xi, \P, \E $ etc.

We first prove the following intermediate convergence result.

\begin{proposition}\label{prop-weak-convergence}
Assume that the sequence $(\theta^N_0, \omega^N_0)$ converges weakly in $(L^2(\T^2))^2$ to some limit $(\theta_0, \omega_0)$. Then, the sequence $(\theta^N, \omega^N)$ of weak solutions converges weakly to the unique solution of the deterministic viscous Boussinesq system
  \begin{equation}\label{prop-weak-convergence.0}
  \left\{\aligned
  & \partial_t\theta + u\cdot\nabla \theta = (\kappa+\nu)\Delta \theta,\\
  & \partial_t \omega + u\cdot\nabla \omega = \partial_1\theta + \nu \Delta\omega
  \endaligned \right.
  \end{equation}
with initial data $(\theta_0, \omega_0)$.
\end{proposition}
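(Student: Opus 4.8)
The plan is to follow the compactness--and--martingale scheme already used in Section 2, the only genuinely new ingredient being that the scaling hypothesis \eqref{key-property} forces the stochastic integrals to disappear in the limit. First I would record that, since $(\theta^N_0,\omega^N_0)$ converges weakly in $(L^2(\T^2))^2$, the initial data are uniformly bounded in $L^2$; hence the a priori bounds \eqref{estimates} hold uniformly in $N$, and $\<\theta^N_0,\phi\>\to\<\theta_0,\phi\>$, $\<\omega^N_0,\phi\>\to\<\omega_0,\phi\>$ for every test function. Because the It\^o form \eqref{stoch-2D-Boussinesq-approx} has exactly the same structure as \eqref{stoch-2D-Boussinesq-Ito} and the noise scaling cancels in the quadratic variation (as it did in Lemma \ref{lem-increments}), the increment estimates and therefore the tightness of the joint laws $\eta_N$ on $\mathcal X$ from \eqref{space-support} follow verbatim as in Corollary \ref{cor-tightness-2}. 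By Prohorov's and Skorokhod's theorems I pass to a subsequence and realise, on a new probability space, processes $(\tilde\theta^{N_i},\tilde\omega^{N_i})$ with laws $\eta_{N_i}$ converging $\tilde\P$-a.s. in $\mathcal X$ to a limit $(\tilde\theta,\tilde\omega)$; the bounds \eqref{estimates} are inherited by the tilde processes and, in the limit, by $(\tilde\theta,\tilde\omega)$.

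The heart of the argument is to show that the martingale terms vanish. Writing $M^N_t(\phi)=\frac{\sqrt{2\nu}}{\|\sigma^N\|_{\ell^2}}\sum_k\sigma^N_k\int_0^t\<\theta^N_s,e_k\cdot\nabla\phi\>\,\d W^k_s$ for the stochastic integral in \eqref{approx.1}, I compute its quadratic variation from \eqref{joint-variation}, using the symmetry $\sigma^N_k=\sigma^N_{-k}$ together with $e_{-k}=\overline{e_k}$, and then apply Doob's inequality to get
\[\E\Big[\sup_{t\le T}|M^N_t(\phi)|^2\Big]\le C\,\frac{\nu}{\|\sigma^N\|_{\ell^2}^2}\sum_k(\sigma^N_k)^2\,\E\int_0^T|\<\theta^N_s,e_k\cdot\nabla\phi\>|^2\,\d s.\]
Here I bound $(\sigma^N_k)^2\le\|\sigma^N\|_{\ell^\infty}^2$ and use the Biot--Savart structure $e_k=e^{2\pi ik\cdot x}(\pm k^\perp/|k|)$: setting $g=\theta\nabla\phi$ one has $|\<\theta,e_k\cdot\nabla\phi\>|^2=|k|^{-2}|k^\perp\cdot\hat g(\mp k)|^2\le|\hat g(\mp k)|^2$ because $|k^\perp|=|k|$, so summing over $k$ and invoking Parseval gives the key bound $\sum_k|\<\theta,e_k\cdot\nabla\phi\>|^2\le\|\nabla\phi\|_{L^\infty}^2\|\theta\|_{L^2}^2$. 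Combined with \eqref{estimates} this yields $\E[\sup_{t\le T}|M^N_t(\phi)|^2]\le C_\phi\,\|\sigma^N\|_{\ell^\infty}^2/\|\sigma^N\|_{\ell^2}^2$, which tends to $0$ by \eqref{key-property}; the same computation applies to the stochastic integral in \eqref{approx.2}. Consequently the martingale parts of $\tilde\theta^{N_i}$ and $\tilde\omega^{N_i}$ converge to $0$ in $L^2(\tilde\P)$.

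It then remains to pass to the limit in the weak formulations \eqref{approx.1}--\eqref{approx.2} along the subsequence. The linear terms (the Laplacians and $\<\theta^N_s,\partial_1\phi\>$) converge by the weak convergence of the tilde processes, and the initial data converge as above. For the transport term in the $\theta$-equation I use that $\tilde\theta^{N_i}\to\tilde\theta$ strongly in $L^2(0,T;L^2)$ while $\tilde u^{N_i}=K\ast\tilde\omega^{N_i}\to\tilde u$ strongly in $C([0,T];L^2)$ --- the gain of one derivative in the Biot--Savart map turns the $C([0,T];H^-)$ convergence of $\tilde\omega^{N_i}$, together with the uniform $L^\infty(L^2)$ bound, into strong $L^2$ convergence of the velocity --- so that $\int_0^t\<\tilde\theta^{N_i}_s,\tilde u^{N_i}_s\cdot\nabla\phi\>\,\d s\to\int_0^t\<\tilde\theta_s,\tilde u_s\cdot\nabla\phi\>\,\d s$. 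For the more delicate transport term $\int_0^t\<\tilde\omega^{N_i}_s,\tilde u^{N_i}_s\cdot\nabla\phi\>\,\d s$ in the $\omega$-equation, where the vorticity only converges weakly, I argue by weak--strong pairing: for a.e. $s$ one has $\tilde\omega^{N_i}_s\rightharpoonup\tilde\omega_s$ weakly in $L^2$ (from the $H^-$ convergence plus the $L^2$ bound) and $\tilde u^{N_i}_s\cdot\nabla\phi\to\tilde u_s\cdot\nabla\phi$ strongly in $L^2$, so the integrand converges pointwise in $s$; the uniform bound $|\<\tilde\omega^{N_i}_s,\tilde u^{N_i}_s\cdot\nabla\phi\>|\le C\|\tilde\omega^{N_i}_s\|_{L^2}\|\tilde u^{N_i}_s\|_{L^2}\|\nabla\phi\|_{L^\infty}$ then justifies passing the limit under the time integral by dominated convergence. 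Hence $(\tilde\theta,\tilde\omega)$ satisfies the weak form of \eqref{prop-weak-convergence.0} with data $(\theta_0,\omega_0)$.

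Finally, by the uniqueness statement of Theorem \ref{thm-uniqueness} the limit is the deterministic object $\Phi_\cdot(\theta_0,\omega_0)$, so $\eta_{N_i}\to\delta_{\Phi_\cdot(\theta_0,\omega_0)}$ weakly; as the limit is deterministic and independent of the chosen subsequence, a standard subsequence argument upgrades this to weak convergence of the whole sequence $\eta_N$, equivalently to convergence of $(\theta^N,\omega^N)$ to $\Phi_\cdot(\theta_0,\omega_0)$ in probability on $\mathcal X$. I expect the main obstacle to be the passage to the limit in the vorticity transport term: unlike the temperature equation, there $\tilde\omega^{N_i}$ enters both factors and converges only in the weak topology of $C([0,T];H^-)$, so the argument hinges on the compactness of the Biot--Savart map to supply strong convergence of the velocity and thereby close the weak--strong pairing. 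The martingale estimate, by contrast, is a direct computation once the Parseval-type bound above is in hand, and is precisely where hypothesis \eqref{key-property} is used.
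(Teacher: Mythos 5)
Your proposal is correct and follows essentially the same route as the paper: uniform $L^2$ bounds on the initial data plus the a priori estimates give tightness on $\mathcal X$, Prohorov--Skorokhod yields a.s.\ convergent representatives, the martingale parts vanish in mean square via the bound $(\sigma^N_k)^2\leq\|\sigma^N\|_{\ell^\infty}^2$ combined with the Bessel/Parseval estimate $\sum_k|\<\theta,e_k\cdot\nabla\phi\>|^2\leq\|\theta\|_{L^2}^2\|\nabla\phi\|_\infty^2$ and hypothesis \eqref{key-property}, and uniqueness of the deterministic limit upgrades subsequential to full-sequence convergence. Your only addition is to spell out the weak--strong pairing for the nonlinear terms (strong $C([0,T];L^2)$ convergence of the velocity via the Biot--Savart gain of one derivative), a step the paper dismisses as standard with a reference to \cite{FGL}, and your treatment of it is sound.
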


\begin{proof}
There is a constant $C>0$ such that
  \begin{equation}\label{prop-weak-convergence.1}
  \sup_{N\geq 1} \big( \|\theta^N_0 \|_{L^2}\vee \|\omega^N_0 \|_{L^2} \big) \leq C<\infty.
  \end{equation}
Combining this bound with the estimates \eqref{estimates}, we can repeat the arguments in Section 2 to show that the family $\{\eta^N \}_{N\geq 1}$ of laws of the processes $\{(\theta^N, \omega^N) \}_{N\geq 1}$ is tight on
  $$\big( L^2(0,T, L^2(\T^2)) \cap C(0,T, H^-(\T^2)) \big) \times C(0,T, H^-(\T^2)). $$
Consequently, applying the Prohorov theorem and the Skorokhod theorem, we can find a subsequence $\{\eta^{N_i} \}_{i\geq 1}$ and a new probability space $\big(\hat\Xi, \hat\F, \hat\P \big)$, and a sequence of processes $\big\{(\hat\theta^{N_i}, \hat\omega^{N_i}) \big\}_{i\geq 1}$ and a limit process $(\hat\theta, \hat\omega)$ defined on $\big(\hat\Xi, \hat\F, \hat\P \big)$, such that
\begin{itemize}
\item[(1$'$)] for any $i\geq 1$, the pair $\big(\hat \theta^{N_i}, \hat\omega^{N_i} \big)$ has the same law $\eta^{N_i}$ as $(\theta^{N_i}, \omega^{N_i} )$;
\item[(2$'$)] $\hat \P$-a.s., $\big(\hat \theta^{N_i}, \hat\omega^{N_i} \big)$ converges in the topology of
  $$\big( L^2(0,T, L^2(\T^2)) \cap C(0,T, H^-(\T^2)) \big) \times C(0,T, H^-(\T^2)) $$
  to the limit process $\big(\hat \theta, \hat\omega \big)$.
\end{itemize}

Combining the assertion (1$'$) with the equations \eqref{approx.1} and \eqref{approx.2}, we conclude that the pair $\big(\hat \theta^{N_i}, \hat\omega^{N_i} \big)$ satisfies the same equations. Indeed, similarly to the discussions at the end of Section 2, for any $i\geq 1$, there exists a family of independent Brownian motions $\{\hat W^{N_i,k} \}_{k\in \Z^2_0}$ such that for all $i\geq 1$, for any $\phi\in C^\infty(\T^2)$, one has $\hat\P$-a.s. for all $t\in [0,T]$,
  \begin{equation}\label{prop-weak-convergence.2}
  \aligned
  \<\hat\theta^{N_i}_t,\phi\> &= \<\theta^{N_i}_0,\phi\> + \int_0^t \big[ \<\hat\theta^{N_i}_s, \hat u^{N_i}_s\cdot \nabla\phi \> + (\kappa+\nu)\<\hat \theta^{N_i}_s, \Delta \phi \> \big]\,\d s \\
  &\quad - \frac{\sqrt{2\nu}}{\|\sigma^{N_i} \|_{\ell^2}} \sum_k \sigma^{N_i}_k \int_0^t \<\hat \theta^{N_i}_s, e_k\cdot \nabla\phi \>\,\d \hat W^{N_i,k}_s,
  \endaligned
  \end{equation}
  \begin{equation}\label{prop-weak-convergence.3}
  \aligned
  \<\hat\omega^{N_i}_t,\phi\> &= \<\omega^{N_i}_0,\phi\> + \int_0^t \big[ \<\hat\omega^{N_i}_s, \hat u^{N_i}_s\cdot \nabla\phi \> - \<\hat\theta^{N_i}_s, \partial_1\phi\> + \nu\<\hat\omega^{N_i}_s, \Delta \phi \> \big]\,\d s \\
  &\quad - \frac{\sqrt{2\nu}}{\|\sigma^{N_i} \|_{\ell^2}} \sum_k \sigma^{N_i}_k \int_0^t \<\hat\omega^{N_i}_s, e_k\cdot \nabla\phi \>\,\d \hat W^{N_i,k}_s,
  \endaligned
  \end{equation}
where $\hat u^{N_i}_s = K\ast \hat \omega^{N_i}_s,\, s\in [0,T]$. Furthermore, thanks to the bounds \eqref{estimates} and \eqref{prop-weak-convergence.1}, we deduce from assertion (1$'$) that, $\hat\P$-a.s.,
  \begin{equation}\label{prop-weak-convergence.4}
  \sup_{i\geq 1} \Big( \|\hat\theta^{N_i} \|_{L^\infty(L^2)} \vee \|\nabla\hat \theta^{N_i} \|_{L^2(L^2)} \Big) \leq C' <\infty, \quad \sup_{i\geq 1} \|\hat\omega^{N_i} \|_{L^\infty(L^2)} \leq C' <\infty.
  \end{equation}

Combining the above uniform estimates with assertion (2$'$), it is easy to show that all the terms in \eqref{prop-weak-convergence.2} and \eqref{prop-weak-convergence.3}, except the stochastic integrals, converge as $i\to \infty$ to the corresponding limits. In the following, we prove that the stochastic integrals vanish in the mean square sense. Indeed, by the Burkholder-Daves-Gundy inequality,
  $$\aligned
  &\quad \E\bigg[\Big| \frac{\sqrt{2\nu}}{\|\sigma^{N_i} \|_{\ell^2}} \sum_k \sigma^{N_i}_k \int_0^t \<\hat \theta^{N_i}_s, e_k\cdot \nabla\phi \>\,\d \hat W^{N_i,k}_s \Big|^2 \bigg] \\
  &= \frac{2\nu}{\|\sigma^{N_i} \|_{\ell^2}^2} \sum_k \big( \sigma^{N_i}_k \big)^2\, \E \int_0^t \big| \<\hat \theta^{N_i}_s, e_k\cdot \nabla\phi \> \big|^2 \,\d s \\
  &\leq 2\nu \frac{\|\sigma^{N_i} \|_{\ell^\infty}^2}{\|\sigma^{N_i} \|_{\ell^2}^2}\, \E \sum_k \int_0^t \big| \<\hat \theta^{N_i}_s, e_k\cdot \nabla\phi \> \big|^2 \,\d s.
  \endaligned $$
Note that the family of vector fields $\{e_k \}_{k\in \Z^2_0}$ is orthonormal; by the Bessel inequality,
  $$\sum_k \big| \<\hat \theta^{N_i}_s, e_k\cdot \nabla\phi \> \big|^2 \leq \|\hat \theta^{N_i}_s \nabla\phi \|_{L^2}^2 \leq \|\hat \theta^{N_i}_s \|_{L^2}^2 \|\nabla\phi \|_{\infty}^2 \leq C \|\nabla\phi \|_{\infty}^2, $$
where the last step follows from \eqref{prop-weak-convergence.4}. Consequently,
  $$\E\bigg[\Big| \frac{\sqrt{2\nu}}{\|\sigma^{N_i} \|_{\ell^2}} \sum_k \sigma^{N_i}_k \int_0^t \<\hat \theta^{N_i}_s, e_k\cdot \nabla\phi \>\,\d \hat W^{N_i,k}_s \Big|^2 \bigg] \leq C\nu T \|\nabla\phi \|_{\infty}^2 \frac{\|\sigma^{N_i} \|_{\ell^\infty}^2}{\|\sigma^{N_i} \|_{\ell^2}^2} $$
which, by \eqref{key-property}, tends to 0 as $i\to \infty$. In the same way, we can show that the martingale part in \eqref{prop-weak-convergence.3} also vanishes as $i\to \infty$. Therefore, the limit $(\hat\theta, \hat\omega)$ is a weak solution to \eqref{prop-weak-convergence.0}. Since the system \eqref{prop-weak-convergence.0} admits a unique solution for any initial data $(\theta_0, \omega_0)\in (L^2(\T^2))^2$, we conclude that the whole sequence $\{(\hat\theta^N, \hat\omega^N) \}_{N\geq 1}$ converges weakly to $(\hat\theta, \hat\omega)$. This immediately leads to the desired result.
\end{proof}

Now we can prove the main result of this paper.

\begin{proof}[Proof of Theorem \ref{main-thm}]
We argue by contraction. Suppose that there exists an $\eps_0>0$ such that
  $$\limsup_{N\to \infty} \sup_{\|(\theta_0,\omega_0) \|_{L^2} \leq R} \sup_{Q \in \mathcal C_{\sigma^N}(\theta_0, \omega_0)} Q \big(\{\varphi \in \mathcal X: \|\varphi - \Phi_\cdot (\theta_0,\omega_0) \|_{\mathcal X} >\eps_0 \} \big) >0.$$
Then, there is a subsequence $\{N_i \}_{i\geq 1}$ and $(\theta^{N_i}_0,\omega^{N_i}_0) \in (L^2(\T^2))^2$ with $\sup_{i\geq 1} \|(\theta^{N_i}_0, \omega^{N_i}_0) \|_{L^2} \leq R$, and $Q^{N_i} \in \mathcal C_{\sigma^{N_i}}(\theta^{N_i}_0, \omega^{N_i}_0)$ such that (choose $\eps_0$ even smaller if necessary)
  \begin{equation}\label{proof-converg.1}
  Q^{N_i} \big(\{\varphi \in \mathcal X: \|\varphi - \Phi_\cdot (\theta^{N_i}_0, \omega^{N_i}_0) \|_{\mathcal X} >\eps_0 \} \big) \geq \eps_0 , \quad i\geq 1.
  \end{equation}

First, since $\sup_{i\geq 1} \|(\theta^{N_i}_0, \omega^{N_i}_0) \|_{L^2} \leq R$, up to a subsequence, we can assume that $(\theta^{N_i}_0, \omega^{N_i}_0)$ converges weakly in $(L^2(\T^2))^2$ to some $(\theta_0,\omega_0)$. For any $i\geq 1$, let $(\theta^{N_i}, \omega^{N_i})$ be a weak solution to \eqref{stoch-2D-Boussinesq-Ito} in the sense of Definition \ref{2-def}, with $\sigma = \sigma^{N_i}$ and distributed as $Q^{N_i}$; in particular, it has the initial data $(\theta^{N_i}_0, \omega^{N_i}_0)$. Using again the boundedness of the family $\{(\theta^{N_i}_0, \omega^{N_i}_0) \}_{i\geq 1}$ and by Theorem \ref{thm-existence}, we have
  $$\sup_{i\geq 1} \Big(\|\theta^{N_i} \|_{L^\infty(L^2)} \vee \|\nabla\theta^{N_i} \|_{L^2(L^2)}\Big) \bigvee \sup_{i\geq 1} \|\theta^{N_i} \|_{L^\infty(L^2)} \leq C_0 <\infty ;$$
moreover, $(\theta^{N_i}, \omega^{N_i})$ satisfies stochastic equations of the form \eqref{approx.1} and \eqref{approx.2}. Therefore, we can repeat the arguments in the proof of Proposition \ref{prop-weak-convergence} to show that, up to a further subsequence, $(\theta^{N_i}, \omega^{N_i})$ converges weakly to the unique solution $\Phi_\cdot (\theta_0,\omega_0)$ of the deterministic system \eqref{prop-weak-convergence.0} with initial data $(\theta_0,\omega_0)$. Since the limit is deterministic, we conclude that $(\theta^{N_i}, \omega^{N_i})$ converges also in probability to $\Phi_\cdot (\theta_0,\omega_0)$. This implies that
  \begin{equation}\label{proof-converg.2}
  \lim_{i\to \infty} Q^{N_i} \big(\{\varphi \in \mathcal X: \|\varphi - \Phi_\cdot (\theta_0,\omega_0) \|_{\mathcal X} >\eps_0 \} \big) =0. \end{equation}

Next, for any $i\geq 1$, recall that $\Phi_\cdot (\theta^{N_i}_0, \omega^{N_i}_0)$  is the unique solution to the deterministic system \eqref{prop-weak-convergence.0} with initial data $(\theta^{N_i}_0, \omega^{N_i}_0)$. Using the equations in \eqref{prop-weak-convergence.0} and the weak convergence of  $(\theta^{N_i}_0, \omega^{N_i}_0)$ to $(\theta_0,\omega_0)$, it is easy to see that, up to a subsequence, $\Phi_\cdot (\theta^{N_i}_0, \omega^{N_i}_0)$ converges in the topology of $\mathcal X$ to the limit $\Phi_\cdot(\theta_0,\omega_0)$, which is the solution of the system \eqref{prop-weak-convergence.0} with initial data $(\theta_0,\omega_0)$. Combining this fact with \eqref{proof-converg.2}, we immediately get a contradiction to \eqref{proof-converg.1}. This completes the proof.
\end{proof}

\section{Appendix: Uniqueness of viscous Boussinesq system with $L^2$-initial data}

In this section we prove that the viscous system
  \begin{equation}\label{viscous-2D-Boussinesq}
  \left\{\aligned
  & \partial_t \theta + u\cdot\nabla \theta = \kappa\Delta \theta,\\
  & \partial_t\omega + u\cdot\nabla \omega = \partial_1\theta + \nu \Delta \omega, \\
  & u= K\ast \omega
  \endaligned \right.
  \end{equation}
admits a unique global solution for $L^2$-initial data $(\theta_0, \omega_0)$. Since the precise values of $\kappa$ and $\nu$ are not important in the arguments below, we assume $\kappa=\nu =1$ for simplicity.

We first give some classical a priori estimates. Since the velocity field $u$ is divergence free, the first equation in \eqref{viscous-2D-Boussinesq} yields
  \begin{equation}\label{a-priori-1}
  \|\theta_t\|_{L^2}^2 + 2 \int_0^t \|\nabla\theta_s \|_{L^2}^2\,\d s = \|\theta_0\|_{L^2}^2, \quad t\in [0,T].
  \end{equation}
Using the second equation we have
  \begin{equation}\label{a-priori-2.1}
  \frac{\d}{\d t}\|\omega \|_{L^2}^2= 2\<\omega, \Delta\omega + \partial_1\theta \> \leq -2\|\nabla\omega \|_{L^2}^2 +2 \|\omega \|_{L^2} \|\partial_1\theta \|_{L^2}.
  \end{equation}
In particular, $\frac{\d}{\d t}\|\omega \|_{L^2}^2 \leq 2 \|\omega \|_{L^2} \|\partial_1\theta \|_{L^2}$, and thus $\frac{\d}{\d t}\|\omega \|_{L^2} \leq \|\partial_1\theta \|_{L^2} \leq \|\nabla\theta \|_{L^2}$, which implies
  \begin{equation}\label{a-priori-2.2}
  \aligned
  \|\omega_t \|_{L^2} &\leq \|\omega_0 \|_{L^2} + \int_0^t \|\nabla\theta_s \|_{L^2}\,\d s \\
  &\leq \|\omega_0 \|_{L^2} + \sqrt{T} \|\nabla\theta \|_{L^2(L^2)} \leq (1\vee \sqrt T) (\|\omega_0 \|_{L^2} + \|\theta_0\|_{L^2}),
  \endaligned
  \end{equation}
where $\|\cdot \|_{L^2(L^2)}= \|\cdot \|_{L^2(0,T; L^2(\T^2))}$ and we have used \eqref{a-priori-1} in the last step. Integrating  \eqref{a-priori-2.1} in time yields
  $$\aligned
  \|\omega_t \|_{L^2}^2 + 2\int_0^t \|\nabla\omega_s \|_{L^2}^2 \,\d s &\leq \|\omega_0 \|_{L^2}^2 + 2 \int_0^t \|\omega_s \|_{L^2} \|\partial_1\theta_s \|_{L^2} \,\d s \\
  &\leq \|\omega_0 \|_{L^2}^2 + 2 \|\omega \|_{L^2(L^2)} \|\nabla\theta \|_{L^2(L^2)} .
  \endaligned $$
Combining this estimate with \eqref{a-priori-1} and \eqref{a-priori-2.2}, we arrive at
  \begin{equation}\label{a-priori-2}
  \|\omega_t \|_{L^2}^2 + 2\int_0^t \|\nabla\omega_s \|_{L^2}^2 \,\d s \leq C_T \big(\|\omega_0 \|_{L^2}^2 + \|\theta_0 \|_{L^2}^2 \big).
  \end{equation}
From the estimates \eqref{a-priori-1} and \eqref{a-priori-2}, an application of the Galerkin approximation yields the existence of solutions to the system \eqref{viscous-2D-Boussinesq}.

\begin{theorem}\label{thm-uniqueness}
Given $(\theta_0, \omega_0)\in (L^2(\T^2))^2$, the system \eqref{viscous-2D-Boussinesq} admits a unique solution in
  $$\theta, \omega\in L^\infty \big(0,T; L^2(\T^2) \big) \cap L^2 \big(0,T; H^1(\T^2) \big); $$
moreover, there exists a constant $C_T>0$ such that the solution fulfils the bounds
  \begin{equation}\label{thm-uniqueness.1}
  \|\theta \|_{L^\infty(L^2)} \vee \|\theta \|_{L^2(H^1)} \vee \|\omega \|_{L^\infty(L^2)} \vee \|\omega \|_{L^2(H^1)} \leq C_T \big(\|\omega_0 \|_{L^2} + \|\theta_0 \|_{L^2} \big).
  \end{equation}
\end{theorem}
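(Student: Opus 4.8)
The plan is to treat existence and the bound \eqref{thm-uniqueness.1} as essentially settled by the a priori estimates, and to devote the real effort to uniqueness. Indeed, \eqref{a-priori-1} controls $\|\theta\|_{L^\infty(L^2)}$ and $\|\theta\|_{L^2(H^1)}$, while \eqref{a-priori-2} controls $\|\omega\|_{L^\infty(L^2)}$ and $\|\omega\|_{L^2(H^1)}$ in terms of the data; a Galerkin scheme together with these uniform bounds and a compactness argument entirely analogous to (but simpler than) Section~2 produces a solution in $L^\infty(0,T;L^2)\cap L^2(0,T;H^1)$ satisfying \eqref{thm-uniqueness.1}. The only substantial task is therefore uniqueness, which I would establish by an $L^2$ stability estimate on the difference of two solutions.

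Concretely, let $(\theta_1,\omega_1)$ and $(\theta_2,\omega_2)$ be two solutions in the stated class with the same initial data, and set $\bar\theta=\theta_1-\theta_2$, $\bar\omega=\omega_1-\omega_2$, $\bar u=u_1-u_2=K\ast\bar\omega$. Subtracting the two copies of \eqref{viscous-2D-Boussinesq} and rearranging the transport terms as $u_1\cdot\nabla\theta_1-u_2\cdot\nabla\theta_2=u_1\cdot\nabla\bar\theta+\bar u\cdot\nabla\theta_2$ (and similarly for $\omega$), the differences satisfy $\partial_t\bar\theta+u_1\cdot\nabla\bar\theta+\bar u\cdot\nabla\theta_2=\Delta\bar\theta$ and $\partial_t\bar\omega+u_1\cdot\nabla\bar\omega+\bar u\cdot\nabla\omega_2=\partial_1\bar\theta+\Delta\bar\omega$. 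Testing against $\bar\theta$ and $\bar\omega$ respectively, and using that $u_1$ is divergence free to kill $\<u_1\cdot\nabla\bar\theta,\bar\theta\>=\<u_1\cdot\nabla\bar\omega,\bar\omega\>=0$, I obtain energy identities whose right-hand sides contain only the coupling term $\<\partial_1\bar\theta,\bar\omega\>$ and the two genuinely nonlinear remainders, which after an integration by parts read $\<\theta_2,\bar u\cdot\nabla\bar\theta\>$ and $\<\omega_2,\bar u\cdot\nabla\bar\omega\>$.

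The heart of the proof is to absorb these three terms into the dissipation. For $\bar u$ I would use the Biot--Savart bound $\|\bar u\|_{L^4}\leq C\|\bar u\|_{H^1}\leq C\|\bar\omega\|_{L^2}$, and for the background solutions the two-dimensional Ladyzhenskaya inequality $\|f\|_{L^4}^2\leq C\|f\|_{L^2}\|\nabla f\|_{L^2}$. This gives, for example, $|\<\omega_2,\bar u\cdot\nabla\bar\omega\>|\leq\|\bar u\|_{L^4}\|\nabla\bar\omega\|_{L^2}\|\omega_2\|_{L^4}\leq C\|\bar\omega\|_{L^2}\|\nabla\bar\omega\|_{L^2}\|\omega_2\|_{L^2}^{1/2}\|\nabla\omega_2\|_{L^2}^{1/2}$, and after Young's inequality the factor $\|\nabla\bar\omega\|_{L^2}$ is absorbed at the cost of $C\|\bar\omega\|_{L^2}^2\,\|\omega_2\|_{L^2}\|\nabla\omega_2\|_{L^2}$; the temperature remainder is handled identically, and $\<\partial_1\bar\theta,\bar\omega\>$ is bounded crudely by $\tfrac14\|\nabla\bar\theta\|_{L^2}^2+\|\bar\omega\|_{L^2}^2$. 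Writing $E(t)=\|\bar\theta_t\|_{L^2}^2+\|\bar\omega_t\|_{L^2}^2$ and summing the two estimates yields $\frac{\d}{\d t}E(t)+\|\nabla\bar\theta\|_{L^2}^2+\|\nabla\bar\omega\|_{L^2}^2\leq g(t)\,E(t)$ with $g(t)=C\big(1+\|\theta_2\|_{L^2}\|\nabla\theta_2\|_{L^2}+\|\omega_2\|_{L^2}\|\nabla\omega_2\|_{L^2}\big)$.

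Crucially, $g\in L^1(0,T)$: since $\theta_2,\omega_2\in L^\infty(0,T;L^2)\cap L^2(0,T;H^1)$, each product $\|\theta_2\|_{L^2}\|\nabla\theta_2\|_{L^2}$ and $\|\omega_2\|_{L^2}\|\nabla\omega_2\|_{L^2}$ lies in $L^2(0,T)\subset L^1(0,T)$. Since $E(0)=0$, Gr\"onwall's inequality forces $E\equiv0$, that is $\bar\theta=\bar\omega=0$, which is uniqueness. I expect the main obstacle to be precisely the control of the two nonlinear remainders with only the available $L^\infty(L^2)\cap L^2(H^1)$ regularity of the background solutions; the estimate closes because the 2D Ladyzhenskaya interpolation produces exactly a half power of $\|\nabla\bar\omega\|_{L^2}$ (resp.\ $\|\nabla\bar\theta\|_{L^2}$) that Young's inequality can absorb, while the leftover coefficient is time-integrable for any finite-energy solution. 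The coupling term $\partial_1\bar\theta$ is harmless, being linear and of first order.
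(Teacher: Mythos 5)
Your proposal is correct and follows essentially the same route as the paper's: existence and the bound \eqref{thm-uniqueness.1} come from the a priori estimates \eqref{a-priori-1}--\eqref{a-priori-2} via Galerkin, and uniqueness from an $L^2$ energy estimate on the differences using the Biot--Savart bound $\|\bar u\|_{H^1}\leq C\|\bar\omega\|_{L^2}$, absorption into the dissipation by Young, and Gr\"onwall with a time-integrable coefficient. Your only deviations are cosmetic and harmless: you integrate the nonlinear remainders by parts and use Ladyzhenskaya interpolation on the background solution, producing the Gr\"onwall weight $\|\theta_2\|_{L^2}\|\nabla\theta_2\|_{L^2}+\|\omega_2\|_{L^2}\|\nabla\omega_2\|_{L^2}$ where the paper estimates directly with $H^1\subset L^4$ and the weight $\|\theta^1\|_{H^1}^2+\|\omega^1\|_{H^1}^2$ (both integrable on $[0,T]$), and you leave implicit the standard justification of the energy identity for weak solutions, which the paper supplies via $\theta,\omega\in\mathcal H$ and \eqref{proof-uniq-0}.
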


The proof of the uniqueness assertion is similar to that of 2D Navier-Stokes equations. We introduce the following space
  $$\mathcal H= \big\{ f\in L^2\big(0,T; H^1(\T^2) \big): \partial_t f\in L^2\big(0,T; H^{-1}(\T^2) \big) \big\}, $$
where the time derivative is understood in the distributional sense. It is a classical result (cf. \cite[p. 39, Lemma 2.1.5]{Kuk-Shiri}) that $\mathcal H$ is continuously embedded into $C\big([0,T], L^2(\T^2) \big)$ and compactly embedded in $L^2 \big([0,T], H^m(\T^2) \big)$ for any $m<-1$. Moreover, by \cite[(2.10)]{Kuk-Shiri}, for any $f\in \mathcal H$ and $t\in [0,T]$,
  $$\int_0^t \<f, \partial_s f\>\,\d s = \frac12 \big(\|f_t\|_{L^2}^2 - \|f_0\|_{L^2}^2\big).$$
The above equality implies that $t\mapsto \|f_t\|_{L^2}^2$ is an absolutely continuous function and
  \begin{equation}\label{proof-uniq-0}
  \frac{\d}{\d t} \|f_t\|_{L^2}^2 = 2 \<f, \partial_t f\>.
  \end{equation}
Using the estimates \eqref{thm-uniqueness.1} and the equations in \eqref{viscous-2D-Boussinesq}, it is not difficult to see that $\theta, \omega \in \mathcal H$.

\begin{proof}[Proof of Theorem \ref{thm-uniqueness}:  uniqueness.]
Let $(\theta^i,\omega^i), \, i=1,2$ be two solutions to the system \eqref{viscous-2D-Boussinesq} with the same initial condition $(\theta_0,\omega_0)$; denote by $u^i=K\ast \omega^i,\, i=1,2$. Let $\bar\theta= \theta^1 -\theta^2$, $\bar\omega= \omega^1 -\omega^2$ and $\bar u= u^1-u^2$, then
  $$\aligned
  \partial_t \bar\theta - \Delta \bar\theta &= -\big(\bar u\cdot\nabla \theta^1 + u^2 \cdot\nabla \bar\theta \big), \\
  \partial_t \bar\omega - \Delta \bar\omega &= \partial_1\bar\theta -\big(\bar u\cdot\nabla \omega^1 + u^2 \cdot\nabla \bar\omega \big).
  \endaligned $$
Using \eqref{proof-uniq-0}, we arrive at
  \begin{equation}\label{proof-uniq-1}
  \aligned
  \frac12 \|\bar \theta_t\|_{L^2}^2 + \int_0^t \|\nabla \bar\theta_s \|_{L^2}^2 \,\d s &= - \int_0^t \big[ \< \bar u_{s}\cdot\nabla\theta^1_{s}, \bar\theta_s\> +\< u^2_{s}\cdot\nabla\bar\theta_s, \bar\theta_s \> \big] \,\d s= - \int_0^t \< \bar u_{s}\cdot\nabla\theta^1_{s}, \bar\theta_s\> \,\d s
  \endaligned
  \end{equation}
since $u^2_s $ is divergence free. In the same way, using the second equation yields
  \begin{equation}\label{proof-uniq-2}
  \frac12 \|\bar \omega_t\|_{L^2}^2 + \int_0^t \|\nabla \bar\omega_s \|_{L^2}^2 \,\d s = - \int_0^t  \<\bar u_s\cdot\nabla \omega^1_s, \bar\omega_s\> \,\d s + \int_0^t \<\partial_1 \bar\theta_s, \bar \omega_s\> \,\d s.
  \end{equation}

We first estimate the right-hand side of \eqref{proof-uniq-1}: by H\"older's inequality with exponents $\frac12+ \frac14+\frac14 =1$,
  $$\aligned
  \big| \<\bar u_s \cdot\nabla\theta^1_s, \bar\theta_s \> \big| &\leq \|\nabla\theta^1_s \|_{L^2} \|\bar\theta_s \|_{L^4} \|\bar u_s \|_{L^4}  \leq C  \|\theta^1_s \|_{H^1} \|\bar\theta_s \|_{H^1} \|\bar u_s \|_{H^1} ,
  \endaligned $$
where we have used the Sobolev embedding inequality $(H^1 \subset )\, H^{1/2} \subset L^4$ in the 2D setting. Since $\|\bar\theta_s \|_{H^1} \leq C \|\nabla \bar\theta_s \|_{L^2}$, one has
  $$\big| \<\bar u_s \cdot\nabla\theta^1_s, \bar\theta_s \> \big| \leq C \|\theta^1_s \|_{H^1} \|\nabla \bar\theta_s \|_{L^2} \|\bar u_s \|_{H^1} \leq \frac12 \|\nabla \bar\theta_s \|_{L^2}^2 + C \|\theta^1_s \|_{H^1}^2 \|\bar \omega_s \|_{L^2}^2 .$$
Combining this estimate with \eqref{proof-uniq-1} leads to
  \begin{equation}\label{proof-uniq-3}
  \|\bar \theta_t\|_{L^2}^2 + \int_0^t \|\nabla \bar\theta_s \|_{L^2}^2 \,\d s \leq C \int_0^t \|\theta^1_s \|_{H^1}^2 \|\bar \omega_s \|_{L^2}^2 \,\d s.
  \end{equation}
Similarly, for the right-hand side of \eqref{proof-uniq-2}, one has
  $$\big| \<\bar u_s\cdot\nabla \omega^1_s, \bar\omega_s\> \big| \leq \|\nabla \bar\omega_s \|_{L^2}^2 + C \|\omega^1_s \|_{H^1}^2 \|\bar \omega_s \|_{L^2}^2$$
and
  $$|\<\partial_1 \bar\theta_s, \bar \omega_s\>| \leq \|\partial_1 \bar\theta_s \|_{L^2} \|\bar \omega_s \|_{L^2} \leq \|\nabla \bar\theta_s \|_{L^2} \|\bar \omega_s \|_{L^2} \leq \frac12 \|\nabla \bar\theta_s \|_{L^2}^2 + \frac12 \|\bar \omega_s \|_{L^2}^2. $$
Therefore, we obtain from \eqref{proof-uniq-2} that
  $$\|\bar \omega_t\|_{L^2}^2 \leq C \int_0^t \big(1 +\|\omega^1_s \|_{H^1}^2 \big) \|\bar\omega_s \|_{L^2}^2 \,\d s + \int_0^t \|\nabla \bar\theta_s \|_{L^2}^2 \,\d s. $$
Combining this inequality with \eqref{proof-uniq-3} gives us
  $$\aligned
  \|\bar \theta_t\|_{L^2}^2 + \|\bar \omega_t\|_{L^2}^2 &\leq C \int_0^t \big(1 +\|\omega^1_s \|_{H^1}^2 \big) \|\bar\omega_s \|_{L^2}^2 \,\d s + C \int_0^t \|\theta^1_s \|_{H^1}^2 \|\bar \omega_s \|_{L^2}^2 \,\d s  \\
  &\leq C \int_0^t \big(1 +\|\omega^1_s \|_{H^1}^2 + \|\theta^1_s \|_{H^1}^2 \big) \|\bar\omega_s \|_{L^2}^2  \,\d s.
  \endaligned $$
Since $1 +\|\omega^1_s \|_{H^1}^2 + \|\theta^1_s \|_{H^1}^2$ is integrable in $s\in [0,T]$ and $\|\bar\omega_s \|_{L^2}^2 \leq \|\bar \theta_s\|_{L^2}^2 + \|\bar \omega_s\|_{L^2}^2$, the Gronwall inequality implies that $\|\bar \theta_t\|_{L^2}^2 + \|\bar \omega_t\|_{L^2}^2 =0$ for all $t\in [0,T]$. Therefore, $\theta^1= \theta^2$ and $\omega^1 = \omega^2$. This completes the proof of uniqueness.
\end{proof}

\bigskip

\noindent\textbf{Acknowledgement.} The author is grateful to the financial supports of the National Natural Science Foundation of China (Nos. 11688101, 11931004), and the Youth Innovation Promotion Association, CAS (2017003).

\end{document}